\newtheorem{lemma}{Lemma}
\newtheorem{theorem}{Theorem}
\newtheorem{proposition}{Proposition}
\newtheorem{corollary}{Corollary}
\newcommand{\N}{\mathbb{N}}
\newcommand{\Z}{\mathbb{Z}}
\newcommand{\E}{\mathbb{E}}
\newcommand{\PP}{\mathbb{P}}
\newcommand{\s}{\mathfrak{s}}
\newcommand{\drawsocle}[1]
{
\draw[very thick] (-.3,.3) -- node[midway,below, align=center]{#1} (9.3,.3);
}
\newcommand{\draweta}[1]{
\drawsocle{#1}
\foreach \y in {(1,1),(1,2),(2,1),(6,1),(6,2),(6,3),(8,1)}
{
\filldraw \y circle(.4);
}
}
\newcommand{\drawcopyeta}{\draweta{copy of~$\eta$}}
\newcommand{\drawbigpart}
{
\draw[ultra thick] (0,0) -- node[midway,below]{1} (4,0);
\filldraw (2,2) circle(1.5);
}
\newcommand{\drawtwobigpart}
{
\draw[ultra thick] (0,0) -- node[midway,below]{2} (4,0);
\filldraw (2,2) circle(1.5);
\filldraw (2,5.5) circle(1.5);
}
\newcommand{\drawbigvoid}
{
\draw[very thick,dotted,blue!50!white, line cap=round] (0,0) --
node[midway,below]{0} (4,0);
}
\newcommand{\drawbigsleep}
{
\draw[ultra thick] (0,0) -- node[midway,below]{$\s$} (4,0);
\draw (2,2) node[scale=2] {$\s$};
}
\newcommand{\comment}[1]{}
\title{Macroscopic flow out of a segment for Activated Random Walks in
dimension~1}
\author{Nicolas Forien\thanks{
  CEREMADE, CNRS, Université Paris-Dauphine, Université PSL,
75016 Paris, France}}
\begin{document}

\maketitle

\begin{abstract}
Activated Random Walk is a system of interacting particles
which presents a phase transition and a conjectured phenomenon of
self-organized criticality.
In this note, we prove that, in dimension 1,
in the supercritical case, when a segment is
stabilized with particles
being killed when they jump out of the segment, a positive fraction of the
particles leaves the segment with positive probability.

This was already known to be a sufficient condition for being in the active
phase of the model, and the result of this paper is that this
condition is also necessary, except maybe precisely at the critical point.
This result can also be seen as a partial answer to some of the many
conjectures which connect the different points of view on the phase
transition of the model.
\end{abstract}

\par\noindent \emph{Keywords and phrases.} Activated random walks, phase transition, self-organized criticality.
\par\noindent MSC 2020 \emph{subject classifications.} 60K35, 82B26.

\section{Introduction}

We begin with a brief informal presentation of some aspects of the
model.
The reader who is familiar with Activated Random Walks may skip
the two following subsections, passing directly to
Subsection~\ref{section_results} where our results are presented.
Some illustrated sketches of proofs are given in
Subsection~\ref{section_sketches}.

\subsection{Presentation of the model}

The model of Activated Random Walks consists of particles performing
independent random walks on a graph, which fall asleep with a certain
rate and get reactivated in the presence of other particles on the
same site.
The model was popularized by Rolla, Sidoravicius and
Dickman~\cite{Rolla08, RS12, DRS10}, and can be seen as a variant of
the frog model~\cite{AMP02, AMP02b}.
Its study is motivated by its connection with the concept of
self-organized criticality, which was introduced by the
physicists Bak, Tang and Wiesenfeld~\cite{BTW87} to describe physical
systems which present
a critical-like behaviour but without the need to tune the parameters
of the system to particular values (like is the case for an ordinary
phase transition).
To illustrate this concept, Bak, Tang and Wiesenfeld introduced an
interacting particle system called the Abelian sandpile model, which
is a close cousin of Activated Random Walks.
Yet, one of the key differences between these two models is their mixing behaviour.
The mixing properties of Activated Random Walks are investigated
in~\cite{LL21,BS22}, and it turns out that this model mixes faster
than the Abelian sandpile.
This can explain why Activated Random Walks are expected to have a
behaviour which is more universal, in that it is
less sensitive to microscopic details of the system.

Let us now define informally the Activated Random Walk model
on~$\Z^d$.
A configuration of the model consists of a given number of
particles
on each site of~$\Z^d$, each of these particles being in one of
two possible
states: active or sleeping.

The model evolves as follows.
Each active particle performs a
continuous-time random walk on~$\Z^d$ with jump rate~$1$, with a
certain jump distribution~$P:\Z^d\times\Z^d\to[0,1]$ (such
that~$\sum_{y\in\Z^d}P(x,\,y)=1$ for each~$x\in\Z^d$).
This means that, after a random time distributed as an exponential
with parameter~$1$, the active particle at~$x$ jumps to some other
site, the probability of jumping from~$x$ to~$y$ being~$P(x,\,y)$.
This distribution is called translation-invariant if~$P(x,\,y)$ is a
function of~$y-x$ only.

In parallel, each active particle also carries another 
exponential
clock with a certain parameter~$\lambda>0$ and, when this clock rings,
if there are no other particles on the same site, the particle falls
asleep (otherwise, if the particle is not alone, nothing happens).
A sleeping particle stops moving (its continuous-time random walk is
somewhat paused), until it wakes up, which happens when another
particle arrives on the same site. Then, the reactivated particle
resumes its
continuous-time random walk with jump rate~$1$.
Equivalently, one may also consider that a particle can fall
asleep even when it is not alone on a site but, whenever this happens, the
particle is instantaneously waken up by the presence of the other
particles.

Thus, there can never be two sleeping particles at a same site.
Hence, at every time~$t\geq 0$, the configuration of the model at
time~$t$ can by encoded into a
function~$\eta_t:\Z^d\to\N\cup\{\s\}$, where~$\eta_t(x)=k\in\N$ means
that there are~$k$ active particles at the site~$x$,
while~$\eta_t(x)=\s$ means that there is one sleeping particle at~$x$.
Note that, with this notation, particles are indistinguishable: we
only keep track of the number and states of particles on each site,
but not of the individual trajectory of each particle.

Regarding the initial configuration~$\eta_0$, various setups are
interesting to consider.
One possibility is to take~$\eta_0$ which
follows a translation-invariant and ergodic probability distribution
on the set
of all possible configurations, with a finite mean number of
particles per site.
This initial configuration may have only active particles, or both
active and sleeping particles.
Another case of interest is that of~$\eta_0$ with only finitely many
particles, for
example~$n$ particles on the origin.
The model can also be defined on different graphs, or with slight
modifications of the dynamics, like for example adding a sink
vertex where particles get trapped forever.

For a rigorous construction of the process~$(\eta_t)_{t\geq 0}$, we refer the reader to~\cite{RS12}, or to
the review~\cite{Rolla20}.
See also~\cite{LS24} for a presentation of various different settings
of interest and fascinating conjectures connecting
these different points of view on the model.

\subsection{Phase transition}

Let us consider the model on~$\Z^d$ starting with~$\eta_0$ following a
translation-ergodic distribution with mean particle density~$\zeta$.
Depending on the sleep rate~$\lambda$, on the jump distribution~$P$
and on the particle density~$\zeta$, the model can exhibit very different
behaviours.
A natural question is: if we start with only active particles, do they
eventually all fall asleep, or is activity maintained forever?
Note that, on the infinite lattice~$\Z^d$, almost surely there exists
no finite time when all the particles are sleeping.
However, we have the following notion of fixation: we say that the
system fixates if the origin is visited finitely many times by an
active particle during the evolution of the process.
Then, up to events of~$0$ probability, fixation is
equivalent to the configuration on every finite set of~$\Z^d$
eventually being constant and with only sleeping particles or, if we
follow the trajectory of each individual particle, fixation also turns out
to be equivalent to each particle walking only a finite number of
steps, or to one given particle walking only a finite number of steps~\cite{AGG10}.
If the system does not fixate, we say that the system stays active.

Due to the ergodicity assumption on~$\eta_0$, the
probability of fixation can only be~$0$ or~$1$ (see~\cite{RS12}).
Thus, we can have two different regimes, depending on the sleep
rate~$\lambda$, on the jump distribution~$P$ and on the law of the initial
configuration: either the system almost surely fixates (this
regime is called the fixating phase, or stable phase), or the system almost
surely stays active (this is called the active phase, or exploding
phase).
Moreover, we have the following key result about this phase
transition:

\begin{theorem}[\cite{RSZ19}]
\label{thm_phase_transition}
In any dimension~$d\geq 1$, for every sleep
rate~$\lambda\in(0,\infty]$ and every translation-invariant jump
distribution~$P$ which
generates all~$\Z^d$, there exists~$\zeta_c$ such that, for every
translation-ergodic initial distribution with no sleeping particles
and an average density of active particles~$\zeta$, the Activated
Random Walk model on~$\Z^d$ with sleep rate~$\lambda$ almost surely
fixates if~$\zeta<\zeta_c$, whereas it almost surely stays active
if~$\zeta>\zeta_c$.
\end{theorem}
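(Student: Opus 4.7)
The plan is to prove Theorem~\ref{thm_phase_transition} by combining the Diaconis--Fulton abelian property with a monotonicity/coupling argument, and then upgrading fixation criteria so that the threshold does not depend on the specific ergodic law at density~$\zeta$. I would work throughout in the site-wise toppling (stabilization) representation: fix an independent stack of instructions (jump destinations and sleep marks) at each site, and for a finite set~$V\subset\Z^d$ and a configuration~$\eta$ on~$V$, let $m_V(\eta,x)$ denote the number of topplings performed at~$x$ when stabilizing~$\eta$ on~$V$ with particles killed upon leaving~$V$. By the abelian property, $m_V(\eta,x)$ is intrinsic (independent of the order of topplings), monotone in~$\eta$ (adding particles can only increase the number of topplings at every site), and monotone in~$V$ (larger $V$ yields more topplings at each interior site). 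The central link with the infinite dynamics is the standard fact that the system almost surely fixates if and only if $m_{V_n}(\eta_0,0)$ stays bounded as $V_n\uparrow\Z^d$.

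Next I would establish the dichotomy within a single natural family, say the Poisson laws. Define
\[
\zeta_c \;=\; \sup\bigl\{\zeta\geq 0:\text{ the system with initial law }\mathrm{Poisson}(\zeta)\text{ fixates}\bigr\}.
\]
The ergodic $0$--$1$ law of~\cite{RS12} applied to the translation-invariant event $\{m_{V_n}(\eta_0,0)\text{ bounded}\}$ yields that fixation under $\mathrm{Poisson}(\zeta)$ has probability~$0$ or~$1$. Combining this with the monotone coupling of Poisson processes in~$\zeta$ and with the monotonicity of~$m_V$ in~$\eta$, one obtains that $\mathrm{Poisson}(\zeta)$ almost surely fixates for $\zeta<\zeta_c$ and almost surely stays active for $\zeta>\zeta_c$.

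The real content, and the main obstacle, is the universality step: for every translation-ergodic initial law~$\nu$ with no sleeping particles and density~$\zeta$, the system fixates (resp. stays active) as soon as $\zeta<\zeta_c$ (resp. $\zeta>\zeta_c$), independently of the specific~$\nu$. The plan here is a sprinkling/coupling argument in the spirit of~\cite{RSZ19}. For the fixation side, given $\zeta<\zeta_c$, choose $\zeta<\zeta'<\zeta_c$ and couple~$\nu$ with $\mathrm{Poisson}(\zeta')$ in a translation-invariant way so that the Poisson configuration dominates~$\nu$ pointwise on a set of density~$1$ (possible because $\zeta'>\zeta$); apply ergodicity and monotonicity of $m_V$ to transfer boundedness from the Poisson configuration to~$\nu$. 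For the activity side, given $\zeta>\zeta_c$, pick $\zeta_c<\zeta''<\zeta$ and reverse the coupling so that~$\nu$ dominates $\mathrm{Poisson}(\zeta'')$; monotonicity then forces $m_{V_n}(\eta_0,0)\to\infty$ under~$\nu$. The delicate point is producing a translation-invariant coupling that realizes the pointwise domination on a set of full density between two ergodic laws of different densities; this is where the sprinkling argument and a limiting procedure over larger boxes are needed, together with the abelian flexibility which lets us ignore a sparse set of ``defective'' sites by absorbing them into additional topplings. Once this coupling is in place the theorem follows immediately by sandwiching~$\nu$ between Poisson laws.
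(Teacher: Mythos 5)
This theorem is not proved in the paper at all: it is imported verbatim from~\cite{RSZ19}, where the label "Universality and sharpness" already signals that the universality statement is the hard content of that entire paper, not an easy corollary. So there is no "paper's own proof" to compare against, and your sketch should be judged on its own merits as an attempt to reconstruct the argument of~\cite{RSZ19}.

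The first part of your plan (fixing a Diaconis--Fulton instruction array, defining~$\zeta_c$ through the Poisson family, invoking the $0$--$1$ law of~\cite{RS12} and the monotonicity of the odometer in the initial configuration) is sound and standard. The genuine gap is precisely the "universality step," and the route you sketch to close it cannot work. You propose to couple~$\nu$ of density~$\zeta$ with $\mathrm{Poisson}(\zeta')$, $\zeta'>\zeta$, "so that the Poisson configuration dominates~$\nu$ pointwise on a set of density~$1$." No such coupling exists in general. Take, for instance, $\nu$ i.i.d.\ Bernoulli($\zeta$) with $\zeta\in(0,1)$: at every site $\nu\in\{0,1\}$, while $\mathrm{Poisson}(\zeta')$ equals~$0$ with probability $e^{-\zeta'}>0$, so under any translation-invariant coupling the set of sites where $\nu=1$ but the Poisson variable is~$0$ has strictly positive density. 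These "defective" sites cannot be absorbed by extra topplings, since there is a positive fraction of them, and on them you would have to create particles from nothing. Increasing $\zeta'$ does not help: as long as $\zeta'$ is finite, $e^{-\zeta'}>0$, and you need $\zeta'<\zeta_c\leqslant 1$ anyway. The same objection applies symmetrically to your activity-side coupling. Pointwise domination between an ergodic law and a Poisson law of nearby density is simply not available, so your sandwich cannot be set up, and the sprinkling remark does not repair it because the obstruction is not a null or sparse set but a positive-density one.

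The actual argument in~\cite{RSZ19} does not proceed by such a static coupling of initial configurations. It relies on a much finer comparison between the finite-volume stabilization of~$\nu$ and that of a reference distribution, exploiting the abelian structure dynamically (layering particles, comparing odometers of partial stabilizations, and a mass-transport/averaging step) rather than attempting a pointwise stochastic domination of the initial data. Your plan identifies the right \emph{target} (transfer fixation or activity from Poisson to a general ergodic~$\nu$ of nearby density) but not a viable mechanism, and the mechanism is exactly what made~\cite{RSZ19} a substantial contribution.
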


This result shows in particular that the critical
density is in some sense universal, in that it depends on the
initial configuration only through the mean density of
particles~$\zeta$.
Thus, to study this critical density, it is enough to consider the
particular case where the configuration is i.i.d., with a given
probability distribution on~$\N$ with finite mean.

An important challenge in the study of this phase transition is to
relate the property of fixation, which concerns the model on
the infinite lattice with infinitely many particles, to some
finite counterparts of the model.
A key example is the sufficient condition for activity given by
Theorem~\ref{thm_condition_Mn} below.

On the finite box~$V_n=(-n/2,\,n/2\,]^d\cap\Z^d$,
let us consider a
variant of the model where particles are
killed and removed from the system when they jump out of~$V_n$ (or
equivalently, we can consider the model on~$\Z^d$ where particles are
frozen outside~$V_n$, so
that particles which start out of~$V_n$ or which jump out of~$V_n$
are frozen forever and cannot move any more).
Let~$M_n$ count the number of particles that jump out of~$V_n$ when we
let this system evolve until all the sites of~$V_n$ become stable (a site~$x$ is called stable if it is either empty or it contains a
sleeping particle).

\begin{theorem}[\cite{RT18}]
\label{thm_condition_Mn}
With the notation defined above, for every
sleep rate~$\lambda$ and every translation-invariant jump
distribution~$P$ which
generates all~$\Z^d$, if the initial configuration~$\eta_0$ is i.i.d.\ 
and if
\begin{equation}
\label{condition_Mn}
\limsup_{n\to\infty}\,
\frac{\E\big[M_n\big]}{|V_n|}
\ >\
0 
\,,
\end{equation}
then the model on~$\Z^d$ with sleep rate~$\lambda$, jump
distribution~$P$ and initial configuration~$\eta_0$
almost surely stays active.
\end{theorem}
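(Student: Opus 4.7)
The plan is to prove the contrapositive: assume the Activated Random Walk on $\Z^d$ with initial distribution $\eta_0$ of mean density~$\zeta$ fixates almost surely, and deduce that $\E[M_n]/|V_n|\to 0$, contradicting~\eqref{condition_Mn}.

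The first step would be to set up the Diaconis--Fulton site-wise construction: attach at each site $x\in\Z^d$ an i.i.d.\ stack of instructions, each being either a $p$-jump (with probability $1/(1+\lambda)$) or a sleep attempt (with probability $\lambda/(1+\lambda)$). By the abelian property of ARW, the stabilization of any finite $V\subseteq\Z^d$ with killing on $V^c$ is well defined, with odometer $m_V(x)$ (number of topplings at $x$) that is monotone in $V$: $m_{V_n}(x)\leqslant m_\infty(x):=\lim_N m_{V_N}(x)$, and fixation is equivalent to $m_\infty(x)<\infty$ a.s.

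The core of the argument is then mass conservation. In the killed dynamics on $V_n$, every initial particle ends up either asleep inside $V_n$ or counted in $M_n$, so
\begin{equation*}
\sum_{x\in V_n}\eta_0(x) \;=\; M_n \,+\, \#\bigl\{x\in V_n: \eta_\infty^{(n)}(x)=\s\bigr\}.
\end{equation*}
Dividing by $|V_n|$ and taking expectations yields $\E[M_n]/|V_n|=\zeta-\rho_n$, where $\rho_n$ denotes the expected density of sleeping sites after the killed stabilization; it remains to show $\rho_n\to\zeta$. For the full $\Z^d$ dynamics, fixation together with the ergodic theorem (noting that the boundary flux through $\partial V_N$ is $O(|\partial V_N|)=o(|V_N|)$ since each individual particle moves finitely many times under fixation) forces the density of sleeping sites in the limiting configuration $\eta_\infty^{\Z^d}$ to equal $\zeta$. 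Coupling the two dynamics through the shared instruction stacks, the task reduces to showing that the killed and the full stabilizations agree on all but an $o(|V_n|)$-fraction of $V_n$.

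\textbf{Main obstacle.} The hard part is this locality-of-coupling claim. Fixation only gives $m_\infty(x)<\infty$ a.s., not in $L^1$, so one cannot compare expectations of $m_{V_n}(x)$ and $m_\infty(x)$ directly. I would circumvent this by a truncation argument: for a threshold $K$, the set of ``tame'' sites $x\in V_n$ with $m_\infty(x)\leqslant K$ and $\operatorname{dist}(x,\partial V_n)$ larger than a quantity determined by $K$ and the effective range of~$p$ has expected density at least $1-\PP(m_\infty(0)>K)-O(K/n)$, and on these sites the killed and the full stabilizations induce the same local final state (this is where the abelian property and the locality of toppling rules are essential). Letting $n\to\infty$ and then $K\to\infty$ exploits that $\PP(m_\infty(0)>K)\to 0$ by a.s.\ fixation and translation invariance, completing the proof.
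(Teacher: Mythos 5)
The paper itself does not reprove this theorem: it is cited from Rolla and Tournier, with only a one-sentence heuristic in the \emph{direct} direction (positive outflow forces a particle to travel far, hence activity by the $0$-$1$ law). Your proposal argues the contrapositive (fixation implies $\E[M_n]/|V_n|\to 0$), which is a genuinely different and more quantitative route, and your mass-conservation identity $\|\eta_0\|_{V_n}=M_n+S_n$ is the right starting point.

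There is, however, a real gap in the coupling step, which you flag as ``the hard part'' but then try to close with a criterion that does not actually deliver the claim. You call $x$ tame when $m_\infty(x)\leqslant K$ and $\operatorname{dist}(x,\partial V_n)$ is large (in terms of $K$), and assert that on tame sites the killed and the full stabilizations agree locally. This does not follow: a particle that is killed at $\partial V_n$ in the finite dynamics may, in the $\Z^d$ dynamics, wander all the way back and topple $x$, so that $m_{V_n}(x)<m_\infty(x)$ even though $m_\infty(x)$ is small. The bound $m_\infty(x)\leqslant K$ controls the number of topplings \emph{at} $x$, not the length of the excursions that triggered them, so it gives no control on how far the particles hitting $x$ came from, and no reason why removing particles at the boundary should leave $m_{V_n}(x)$ unchanged. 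The event you actually need is the agreement event $\{m_{V_n}(y)=m_\infty(y)\text{ for every }y\text{ in a }p\text{-neighbourhood of }x\}$. Since for each fixed site $m_{V}(\cdot)$ increases to $m_\infty(\cdot)<\infty$ a.s.\ as $V\uparrow\Z^d$, and since for $x$ at distance at least $r$ from $\partial V_n$ one has $m_{V_n}(x)\geqslant m_{x+V_{2r}}(x)$ which by translation invariance has the law of $m_{V_{2r}}(0)$, one gets $\PP\big(m_{V_n}(x)\neq m_\infty(x)\big)\leqslant\PP\big(m_{V_{2r}}(0)\neq m_\infty(0)\big)\to 0$ as $r\to\infty$; a double limit in $n$ and then $r$ then shows the disagreement set has expected size $o(|V_n|)$. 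So the overall double-limit scaffold you describe is sound, but the threshold must be on the distance to $\partial V_n$ (exploiting the monotone convergence of the box odometer at a fixed centre), not on the value of $m_\infty(x)$. A smaller secondary issue: your parenthetical claim that the boundary flux is ``$O(|\partial V_N|)$'' would require $\E[m_\infty(0)]<\infty$, which a.s.\ fixation does not provide; the density-preservation step needs the same $o(|V_N|)$ double-limit argument (or a mass-transport argument) rather than a bound linear in $|\partial V_N|$.
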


This result relies on the following intuitive idea: if with positive
probability a large box looses a
positive fraction of its particles during stabilization, then a
particle starting at the
origin in the model on~$\Z^d$ has a positive probability of walking
arbitrarily far away, which shows that the system stays active with
positive probability, and thus with probability~$1$ (because we have a
0-1 law).

\subsection{Main results}
\label{section_results}

The main result of this paper consists in the addition of a reciprocal
to the implication of Theorem~\ref{thm_condition_Mn}, in the
particular case of dimension~$1$.
Recall that~$M_n$ denotes the number of particles that jump out
of~$V_n=(-n/2,\,n/2\,]\cap\Z$ during the stabilization of~$V_n$ with
particles being killed upon leaving~$V_n$.

\begin{theorem}
\label{main_thm}
In dimension~$d=1$, for every sleep rate~$\lambda>0$ and every
nearest-neighbour translation-invariant jump distribution~$P$,
if the initial configuration~$\eta_0$ is i.i.d.\ with
mean~$\zeta$ and all particles are initially active,
then we have the equivalence:
$$
\zeta
\ >\ \zeta_c
\quad\Longleftrightarrow\quad
\liminf_{n\to\infty}\,
\frac{\E[M_n]}{|V_n|}
\ >\ 
0
\,.
$$
\end{theorem}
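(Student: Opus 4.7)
For the $(\Leftarrow)$ implication: a positive $\liminf$ yields a positive $\limsup$, so Theorem~\ref{thm_condition_Mn} gives activity of the infinite-volume process and Theorem~\ref{thm_phase_transition} gives $\zeta \geqslant \zeta_c$. To exclude $\zeta = \zeta_c$, I would verify separately that $\E[M_n]/n \to 0$ throughout $\zeta \leqslant \zeta_c$: for $\zeta < \zeta_c$ almost-sure fixation on $\Z$ together with abelian monotonicity bounds the killing odometer in $V_n$ by the tight infinite-volume odometer, and the critical case $\zeta = \zeta_c$ is then reached by monotonicity of $\E[M_n]$ in the initial density.

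For the main direction $(\Rightarrow)$: the stabilized configuration on $V_n$ takes values in $\{0,\s\}$, so, writing $\rho_n$ for its expected density of sleeping particles, one has the conservation identity
\[
\E[M_n]/n \;=\; \zeta - \rho_n,
\]
and the theorem reduces to showing $\limsup_n \rho_n < \zeta$ whenever $\zeta > \zeta_c$. I would proceed by contradiction: if $\rho_{n_k} \to \zeta$ along a subsequence, averaging the random stable configuration in $V_{n_k}$ over a uniformly chosen translate and extracting a weak subsequential limit in the compact space $\{0,\s\}^{\Z}$, then passing to an ergodic component, produces a translation-ergodic probability measure $\mu$ supported on stable configurations of $\Z$ with density of sleeping particles exactly $\zeta > \zeta_c$.

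The heart of the argument is then to contradict the existence of such a $\mu$. The natural strategy is to show that any translation-ergodic stable measure with density $\zeta > \zeta_c$ is incompatible with being reachable as a limit of stabilizations of i.i.d.\ configurations at density $\zeta$: intuitively, reactivating even a fraction of the sleeping mass of $\mu$ produces a translation-ergodic active configuration at supercritical density, which by Theorem~\ref{thm_phase_transition} must generate a sustained flow, in tension with $\mu$ having arisen from a nearly-stable stabilization. The main obstacle lies in making this transfer rigorous: Theorem~\ref{thm_condition_Mn} only applies to i.i.d.\ initial configurations, so one must either establish an extension of that theorem adapted to $\mu$, or develop a direct coupling between the stabilization of the original i.i.d.\ and that of a reactivated version of $\mu$. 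Such a coupling should exploit the one-dimensional nearest-neighbour abelian structure --- where the odometer admits a simple edge-by-edge description and every escape occurs through only two boundary sites --- and making it both rigorous and uniform in $n_k$ is where I expect the specifics of dimension one to be essential.
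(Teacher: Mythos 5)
Your conservation identity $\E[M_n]/n = \zeta - \rho_n$ and the reduction to showing $\limsup_n \rho_n < \zeta$ are both fine, as is the extraction of a translation-ergodic weak limit $\mu$ on $\{0,\s\}^{\Z}$ with density $\zeta$. But there is a genuine gap exactly where you flag it, and it is not one that can be patched by the heuristic you offer. The existence of a translation-ergodic stable measure with density $\zeta > \zeta_c$ is not, by itself, a contradiction: the deterministic configuration with a sleeping particle on every site is translation-invariant, stable, and has density $1 > \zeta_c$. The contradiction would have to come from the fact that your $\mu$ arose as a limit of stabilizations of i.i.d.\ at density $\zeta$, and nothing in your argument pins this down. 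Your proposed heart --- reactivating a fraction of the mass of $\mu$ gives a supercritical active configuration which by Theorem~\ref{thm_phase_transition} stays active forever, supposedly ``in tension'' with $\mu$ being nearly stable --- does not yield a contradiction: the reactivated infinite-volume process staying active is perfectly compatible with $\mu$ having been produced by stabilizing a finite box with killing. The paper sidesteps the limiting-measure route entirely. Instead, Theorem~\ref{thm_no_exit} is proved by assuming $p := \PP_\eta(M_n=0)$ and, for any $q<p$, building an explicit periodic block configuration on $\Z$ (each length-$n$ block is, independently, a copy of $\eta$ with probability $q$ or empty with probability $1-q$) which is shown to fixate with positive probability via a block-level version of Rolla--Sidoravicius's ``stabilize each particle as close to the origin as possible'' strategy; the $0$--$1$ law and Theorem~\ref{thm_phase_transition} then force $q\zeta\leqslant\zeta_c$, hence $p\zeta\leqslant\zeta_c$. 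Theorem~\ref{thm_explicit} upgrades this bound on $\PP_\eta(M_n=0)$ to a bound on $\PP_\eta(M_n>\varepsilon n)$ via Lemma~\ref{lemma_NML} and a second trapping step on enlarged boundary layers, and the direct implication of Theorem~\ref{main_thm} is then a one-line combination with the CLT fact that $\PP(\|\eta_0\|_{V_n}\geqslant\zeta n)\to 1/2$.

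There is also a smaller gap in your $(\Leftarrow)$ sketch: ``the critical case $\zeta=\zeta_c$ is then reached by monotonicity of $\E[M_n]$ in the initial density'' is the wrong direction of inequality. Monotonicity gives that $\E[M_n]$ at density $\zeta_c$ dominates its value at any $\zeta<\zeta_c$, which does not bound it from above. The paper's Proposition~\ref{fact_critical} uses a quantitative coupling: couple $\eta_0$ at density $\zeta_c$ to an i.i.d.\ $\eta_0'\leqslant\eta_0$ at density $\zeta_c-\varepsilon$ by independent site-wise thinning, and show $\E[M_n]\leqslant\varepsilon|V_n|+\E[M_n']$ by first sending the $\eta_0-\eta_0'$ particles out of the box with acceptable topplings; the contrapositive of Theorem~\ref{thm_condition_Mn} applied to $\eta_0'$ and $\varepsilon\downarrow 0$ then gives $\lim_n\E[M_n]/|V_n|=0$.
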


This shows that the sufficient condition for activity given by
Theorem~\ref{thm_condition_Mn} is also necessary, except maybe
exactly at the critical point.
Indeed, very few things are known rigorously about the critical
regime~$\zeta=\zeta_c$, with the exception of the particular case of
directed walks in dimension~$1$ starting with~$\eta_0$ i.i.d., for which a proof of non-fixation at
criticality,
due to Hoffman and Sidoravicius, appears in~\cite{CRS14}.

Theorem~\ref{main_thm} answers a conjecture of Levine and Silvestri~\cite{LS24} (in the
particular case of dimension~$1$), showing
that the density~$\zeta_w$ that they define in Section 5.3, and which
corresponds to the infimum of the~$\zeta$ for which
condition~\eqref{condition_Mn} holds when~$\eta_0$ is i.i.d.\ Poisson, is in fact equal to the critical
density~$\zeta_c$.

Our result is made more precise by the following theorem, which indicates an explicit positive fraction which exits with
positive probability, as a function of the sleep rate~$\lambda$ and
the density~$\zeta$.
For every deterministic initial configuration~$\eta:V_n\to\N$ (with
only active particles), let us denote by~\smash{$\|\eta\|=\sum_{x\in
V_n}\eta(x)$} the total number of particles in the
configuration~$\eta$, and let us write~$\PP_\eta$ for the probability
relative to the system started with deterministic initial
configuration equal to~$\eta$.

\begin{theorem}
\label{thm_explicit}
In dimension~$d=1$, for every sleep rate~$\lambda>0$ and every
nearest-neighbour translation-invariant jump distribution~$P$, for
every~$\zeta>\zeta_c$ we have
$$
\forall\,\varepsilon\in\bigg[\,0,\,\frac{\lambda(\zeta-\zeta_c)}{4(1+\lambda)\zeta_c}\bigg)
\qquad
\liminf_{n\to\infty}\ 
\inf_{\substack{\eta:V_n\to\N\,:\\ \|\eta\|\geq\zeta n}}\ 
\PP_{\eta}\big(M_n> \varepsilon n\big)
\ \geq\ 
1-\frac{\zeta_c}{\zeta}
\bigg(1+\frac{4(1+\lambda)\varepsilon}{\lambda}\bigg)
\ >\ 
0
\,.
$$
\end{theorem}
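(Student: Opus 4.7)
The plan is to derive the probability bound from a uniform upper bound on the expected number of sleeping particles that remain inside $V_n$ after stabilization. Let $s_n=\|\eta\|-M_n$ denote this number, which is trivially bounded above by~$n$ since each site accommodates at most one sleeping particle. On the event $\{M_n\leq\varepsilon n\}$ we have $s_n\geq\|\eta\|-\varepsilon n\geq(\zeta-\varepsilon)n$, so Markov's inequality applied to the non-negative variable $s_n$ yields
$$
\PP_\eta\big(M_n\leq\varepsilon n\big)\ \leq\ \PP_\eta\big(s_n\geq(\zeta-\varepsilon)n\big)\ \leq\ \frac{\E_\eta[s_n]}{(\zeta-\varepsilon)n}.
$$
Thus the theorem reduces to an \emph{a priori} bound of the form $\E_\eta[s_n]\leq\zeta_c n + C\varepsilon n + o(n)$, uniform in $\eta:V_n\to\N$ with $\|\eta\|\geq\zeta n$, with a constant $C$ that eventually produces the factor $4(1+\lambda)/\lambda$ in the statement.

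The heuristic reason such a density bound should hold is that the configuration obtained by stabilizing $V_n$ with killing at the boundary cannot sustain an average sleeping-particle density strictly above the critical density~$\zeta_c$: if it did, one could use it as a seed (by tiling $\Z$ with copies of it, or via an Abelian/monotonicity comparison with a configuration on~$\Z$) to produce a stable configuration on $\Z$ arising from a legitimate ARW stabilization that contradicts the characterization of $\zeta_c$ in Theorem~\ref{thm_phase_transition}. To turn this into a quantitative statement I would work in the site-wise stack representation of the dynamics, in which stabilization is a deterministic function of $\eta$ and of the stacks of instructions at each site, and exploit the one-dimensional nearest-neighbour structure: escape is localized at the two endpoints of~$V_n$, and the odometer satisfies a discrete Laplace-type equation, so that the sleep-versus-jump competition parametrized by $\lambda/(1+\lambda)$ can be tracked explicitly.

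The principal obstacle is establishing the density bound $\E_\eta[s_n]\leq\zeta_c n + O(\varepsilon n)$ uniformly over \emph{deterministic}~$\eta$. Since $\eta$ is arbitrary rather than i.i.d., the translation-invariant and ergodic-0-1 arguments standard in the ARW literature are not directly available. A natural way around this is to compare $\eta$, through Abelian monotonicity, to a random auxiliary i.i.d.\ configuration of matching mean density to which Theorem~\ref{thm_phase_transition} can be applied, and to absorb the cost of the comparison into the $O(\varepsilon n)$ part of the bound. The explicit numerical coefficient $4(1+\lambda)/\lambda$ should then emerge from the quantitative accounting of this comparison combined with the sleep-to-jump ratio $\lambda/(1+\lambda)$ governing isolated particles, together with a factor coming from the two endpoints of $V_n$ and from balancing the left/right flows of the nearest-neighbour walk.
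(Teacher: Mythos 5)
Your reduction via Markov's inequality is cleanly stated, but the reduction target is not a step on the way to the theorem—it is a strictly stronger statement that the paper explicitly identifies as open. The uniform density bound $\E_\eta[s_n]\leqslant\zeta_c n+C\varepsilon n+o(n)$ over deterministic $\eta$ with $\|\eta\|\geqslant\zeta n$ would, upon letting $\varepsilon\to 0$, give $\E_\eta[M_n]\geqslant(\zeta-\zeta_c)n-o(n)$; this is the hockey-stick conjecture in expectation (conjecture~17 of~\cite{LS24}, discussed in Section~\ref{section_perspectives} as a strengthening of the present results). So the reduction reverses the difficulty rather than resolving it, and the ``principal obstacle'' you acknowledge is in fact the whole problem.

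The tiling heuristic you sketch for the density bound does not close the gap. Tiling $\Z$ with Bernoulli-thinned copies of $\eta$ is exactly the mechanism used in the paper's proof of Theorem~\ref{thm_no_exit}, but what it delivers is a bound on the probability that no particle exits: if $\PP_\eta(M_n=0)=p$, then a density-$q\zeta$ configuration with $q<p$ fixates with positive probability, forcing $p\zeta\leqslant\zeta_c$. This controls a single event, not the mean of $s_n$. Non-fixation of a supercritical periodic configuration on $\Z$ gives no information about the density of sleeping particles left after finite-volume stabilization with killing at the boundary: a configuration can stay active on $\Z$ globally while the killed finite-volume stabilization still leaves an above-critical sleeping density behind, and conversely. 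There is no monotonicity or Abelian identity that converts non-fixation into the expectation bound you need.

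The paper's actual route is quite different and avoids any a priori moment bound. It first establishes $\PP_\eta(M_n=0)\leqslant\zeta_c/\zeta$ (Theorem~\ref{thm_no_exit}) by the tiling argument, then bootstraps from $M_n=0$ to $M_n\leqslant\varepsilon n$ via Lemma~\ref{lemma_spread}: if at most $\varepsilon n$ particles exit $V_n$, one can with probability $\PP(G_1+\cdots+G_{\lfloor\varepsilon n\rfloor}\leqslant\alpha n)$ trap them in empty strips of width $O(\alpha n)$, so that no particle exits the enlarged segment $V_{n+4\alpha n}$, whose density is still supercritical for $\alpha$ small. The stochastic-domination Lemma~\ref{lemma_NML} (a careful leftmost-toppling argument) is needed to make this two-stage enlargement rigorous. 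This bootstrap is the idea your proposal is missing: it converts the tail event $\{M_n\leqslant\varepsilon n\}$ into the event $\{M_{n+4\alpha n}=0\}$, to which the already-proven probability bound applies, without ever controlling $\E_\eta[s_n]$.

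A secondary remark: even as a reduction, your Markov step requires $\zeta-\varepsilon>0$, which is not automatic from the stated range $\varepsilon<\lambda(\zeta-\zeta_c)/(4(1+\lambda)\zeta_c)$ when $\zeta_c$ is small and $\zeta$ is large; this would need to be tracked if the approach were pursued.
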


To show this, as a first step we prove the following result, which gives
an explicit upper bound on the probability that no particle exits
during stabilization.
This bound is not optimal but, as explained later, it allows
us to obtain the bound of Theorem~\ref{thm_explicit}.

\begin{theorem}
\label{thm_no_exit}
In dimension~$d=1$, for every sleep rate~$\lambda>0$ and every
nearest-neighbour translation-invariant jump distribution~$P$, for
every~$\zeta>\zeta_c$, for every~$n\geq 1$ and every
initial configuration~$\eta:V_n\to\N$ with~$\|\eta\|\geq\zeta n$
particles, initially all active, we have
$$
\PP_\eta\big(M_n=0\big)
\ \leq\ 
\frac{\zeta_c}{\zeta}
\,.
$$
\end{theorem}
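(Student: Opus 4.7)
\medskip\noindent\textbf{Proof plan.} The plan is to use Theorem~\ref{thm_phase_transition} in contrapositive form. Setting $p:=\PP_\eta(M_n=0)$, I would build a translation-ergodic configuration $\hat\eta$ on $\Z$ with no sleeping particles, density at least $\zeta p$, and which fixates almost surely. The contrapositive of Theorem~\ref{thm_phase_transition} then forces $\zeta p\leqslant\zeta_c$, which is exactly the desired bound.

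The construction is a tiling of $\Z$ by independent copies of $\eta$, together with a random shift to make the whole configuration translation-invariant. I would choose a uniform shift $U$ on $\{0,\dots,n-1\}$, i.i.d.\ copies $(\eta_k)_{k\in\Z}$ of $\eta$, and independent stacks of sleep/jump instructions at each site; denote by $V_n^{(k)}$ the $k$-th block of $n$ consecutive integers after shifting by $U$. Using those instructions, one can stabilise block~$k$ in isolation (killing particles that leave it), and the events $E_k:=\{M_n^{(k)}=0\}$ are i.i.d.\ with $\PP(E_k)=p$. I then set
$$
\hat\eta(x)\;=\;\eta_k(x-U-kn)\,\mathbf{1}_{E_k}\qquad\text{for } x\in V_n^{(k)},
$$
so that each block of $\hat\eta$ either holds a copy of $\eta$ (when its isolated stabilisation would have no exits) or is empty. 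The shift $U$ together with the i.i.d.\ structure makes $\hat\eta$ translation-invariant and $\Z$-ergodic, with no sleeping particles and $\E[\hat\eta(x)]=p\|\eta\|/n\geqslant\zeta p$.

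The central step is to show that the ARW started from $\hat\eta$ fixates almost surely. I would drive its dynamics with the very same instructions already used to define the events $E_k$ inside each block. By the abelian property, one is free to perform first all the instructions of the isolated stabilisation of block~$k$, simultaneously for every $k\in\Z$: if $E_k$ occurs, this stabilises block~$k$ without sending any particle out; if $E_k$ does not occur, block~$k$ contains no particles and nothing happens. After this phase, every site holds at most one sleeping particle and no particle has ever crossed a block boundary, so the blocks stay decoupled thereafter; in particular the origin is toppled only finitely many times, which is fixation.

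The main obstacle is handling the abelian property at infinite volume carefully: one has to justify that the block-by-block recipe above genuinely realises the continuous-time ARW dynamics started from $\hat\eta$. This is standard (see e.g.~\cite{RS12}) once one observes that at each individual site only finitely many topplings occur. A secondary technical point is the $\Z$-ergodicity of $\hat\eta$ (not merely its $n\Z$-ergodicity), which follows from the fact that any $\Z$-shift-invariant event is insensitive to the value of $U$ and hence measurable with respect to the i.i.d.\ family $(\eta_k,E_k)_{k\in\Z}$, so that it is trivial by Kolmogorov's $0$--$1$ law. Once fixation of $\hat\eta$ is established, Theorem~\ref{thm_phase_transition} combined with the density lower bound $\zeta p$ yields $\zeta p\leqslant\zeta_c$, i.e.\ $\PP_\eta(M_n=0)\leqslant\zeta_c/\zeta$, as claimed.
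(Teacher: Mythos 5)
There is a genuine gap in the construction of $\hat\eta$. You define $\hat\eta$ by declaring block~$k$ non-empty precisely when the event $E_k$ occurs, and $E_k$ is determined by the very instruction array $\tau$ that you then use to drive the ARW dynamics. This makes $\hat\eta$ a \emph{function of} $\tau$, so the pair $(\hat\eta,\tau)$ is heavily correlated. Theorem~\ref{thm_phase_transition}, however, concerns the ARW process started from a translation-ergodic distribution, i.e.\ an initial configuration sampled \emph{independently} of the randomness driving the walks and the sleeps. Your block-by-block argument shows that $m_{\hat\eta}(\0)<\infty$ when the odometer is computed with the \emph{same} array $\tau$ used to build $\hat\eta$, but this is not the fixation event of the ARW started from $\hat\eta$, which would require a fresh independent array. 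Indeed, if one re-samples the instructions, then conditionally on a block being non-empty (i.e.\ on $E_k$ for the old array), the probability that its isolated stabilization with fresh instructions has no exits is just $p<1$, independently over blocks; so almost surely infinitely many blocks \emph{do} lose particles, and the simple ``no block boundary is ever crossed'' argument collapses. Handling these cascades is precisely the hard part.

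The paper circumvents this by decoupling: it chooses block occupancy with an \emph{independent} Bernoulli($q$) family for some $q<p$, giving a configuration of density $q\zeta$ that is genuinely independent of the instruction array. Since the configuration itself gives no information about which blocks will stabilize without exits, a more elaborate toppling strategy is needed: each occupied block is translated (by acceptable topplings) towards the origin and tried at successive positions until a ``good'' (no-exit) stabilization is found, yielding a coupling with a coarse-grained one-sided ARW on $\N$ with effective sleep probability $p$ and particle density $q$. The law of large numbers then shows that, with positive probability, the origin is never visited, hence $q\zeta\leqslant\zeta_c$ for every $q<p$, giving $p\zeta\leqslant\zeta_c$. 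Your high-level idea (tile, get a fixating configuration of density roughly $p\zeta$, invoke Theorem~\ref{thm_phase_transition}) matches the paper's, but the correlation between $\hat\eta$ and $\tau$ is a real obstruction, and removing it is exactly what forces the extra machinery.
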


Let us stress that the bounds of Theorem~\ref{thm_explicit}
and~\ref{thm_no_exit} hold for any deterministic initial configuration
with at least~$\zeta n$ particles, which includes in particular the
case of interest where all the particles start from the
origin (see comments about this in Section~\ref{section_perspectives}).

To show that Theorem~\ref{thm_no_exit} implies
Theorem~\ref{thm_explicit}, we use the following
important observation: adding empty intervals around~$V_n$ where
particles are not allowed to sleep and
stabilizing the configuration in this enlarged segment does not
increase, in distribution, the number of particles which jump out.
This is the content of Lemma~\ref{lemma_NML}, that we postpone to
a later Section but which can be of independent interest.

Lastly, in the course of the proof of Theorem~\ref{main_thm}, to deal
with the
case~$\zeta=\zeta_c$, we establish the following fact, which can also be
of independent interest:

\begin{proposition}
\label{fact_critical}
In any dimension~$d\geq 1$, for every sleep rate~$\lambda>0$ and
every translation-invariant jump distribution~$P$ on~$\Z^d$ whose support generates all the
group~$\Z^d$, if~$\eta_0$ is i.i.d.\  with
mean~$\zeta_c$ then we have
$$
\lim_{n\to\infty}\,
\frac{\E[M_n]}{|V_n|}
\ =\ 
0
\,.
$$
\end{proposition}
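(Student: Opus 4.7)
The plan is to reduce Proposition~\ref{fact_critical} to a uniform Lipschitz-type estimate of $\zeta \mapsto \E[M_n]/|V_n|$ and combine it with subcriticality for $\zeta<\zeta_c$. For every $\zeta<\zeta_c$, Theorem~\ref{thm_phase_transition} yields almost-sure fixation of the infinite-volume ARW started from an i.i.d.\ initial distribution of mean~$\zeta$, and the contrapositive of Theorem~\ref{thm_condition_Mn} then forces $\lim_n \E[M_n]/|V_n|=0$ in this subcritical regime. This is the main input imported from outside the proposition.

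Now fix $\zeta<\zeta_c$ and Bernoulli-thin the original configuration, keeping each particle of $\eta_0^{\zeta_c}$ independently with probability $\zeta/\zeta_c$; call the result $\eta_0^{\zeta}$, which is i.i.d.\ of mean $\zeta$ and satisfies $\eta_0^{\zeta}\leq \eta_0^{\zeta_c}$ pointwise. Set $\Delta:=\eta_0^{\zeta_c}-\eta_0^\zeta$, a nonnegative i.i.d.\ field of mean $\zeta_c-\zeta$ per site. By the Abelian property of ARW, the stabilization of $\eta_0^{\zeta_c}$ can be decomposed as: first stabilize $\eta_0^{\zeta}$, producing $M_n^\zeta$ exits and a random sleeper configuration $\sigma$; then drop the particles of $\Delta$ into $\sigma$ and stabilize further, producing $\widetilde M_n$ additional exits, so that $M_n^{\zeta_c}=M_n^\zeta+\widetilde M_n$. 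The central claim is the Lipschitz-type bound
$$
\E\bigl[\widetilde M_n\bigr] \;\leq\; C\,(\zeta_c-\zeta)\,|V_n|
$$
for a constant $C$ depending only on $\lambda$ and $p$. Granted this, dividing by $|V_n|$ gives $\E[M_n^{\zeta_c}]/|V_n|\leq \E[M_n^\zeta]/|V_n|+C(\zeta_c-\zeta)$, and letting $n\to\infty$ and then $\zeta\uparrow\zeta_c$ yields the conclusion.

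The hard step is the Lipschitz estimate on $\E[\widetilde M_n]$. Via the conservation identity $M_n+S_n=\|\eta_0\|$ it is equivalent to a lower bound of the form $\E[S_n^{\zeta_c}]\geq \E[S_n^\zeta]-C(\zeta_c-\zeta)|V_n|$, i.e.\ to saying that the expected number of sleepers at stabilization grows at most linearly in the initial density with slope~$C\,|V_n|$. A pointwise bound in $\sigma$ certainly fails: a direct two-site computation shows that for some fixed sleeper configurations $\sigma$ and injection sites $x$, depositing a single active particle at $x$ can trigger a cascade of wake-ups producing strictly more than one expected exit. The argument must therefore exploit both the randomness of the environment $\sigma$ (which is itself the stabilization of an i.i.d.\ field of mean $\zeta$) and the uniform distribution of the locations of the excess particles over $V_n$. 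Controlling, uniformly in $n$, the expected number of exits triggered by a single excess particle deposited at a typical site is the main technical step on which the whole argument hinges.
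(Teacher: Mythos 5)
There is a genuine gap: you state the Lipschitz bound $\E\bigl[\widetilde M_n\bigr]\leqslant C(\zeta_c-\zeta)\,|V_n|$ as ``the central claim'' but do not prove it, and you yourself acknowledge that controlling the expected number of exits triggered by one excess particle dropped into the random stabilized environment~$\sigma$ is an unresolved technical step. The difficulty is created by the order of your decomposition. You stabilize $\eta_0^{\zeta}$ \emph{first}, which produces a complicated random sleeper configuration~$\sigma$, and only then inject the excess field~$\Delta$; a single injected active particle can then trigger arbitrarily long cascades of wake-ups, exactly the obstruction you describe. Nothing in your proposal actually controls this, so the argument does not close.

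The paper performs the decomposition in the \emph{opposite} order, which makes the Lipschitz bound immediate with constant $C=1$. Starting from $\eta_0=\eta'_0+\Delta$ (all particles active), one first applies acceptable topplings to force each of the $\Delta$-particles out of $V_n$; this stage produces exactly $\|\Delta\|_{V_n}$ exits, with expectation $(\zeta_c-\zeta)|V_n|$, and leaves the configuration $\eta'_0$ in place (only the odometer has been shifted). One then stabilizes legally; since the unused instructions at each site are still fresh i.i.d., the number of exits in this second stage is distributed as $M'_n$. The concatenation is an acceptable toppling sequence that stabilizes $\eta_0$ in $V_n$, so by Lemma~\ref{lemma_monotonicity} its odometer dominates the legal stabilization odometer, and since for a fixed instruction array the exit count is an increasing function of the odometer, one gets $\E[M_n]\leqslant (\zeta_c-\zeta)|V_n|+\E[M'_n]$. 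This is precisely your inequality, obtained without ever analyzing~$\sigma$. The remaining steps of your reduction --- invoking Theorem~\ref{thm_phase_transition} together with the contrapositive of Theorem~\ref{thm_condition_Mn} to get $\E[M'_n]/|V_n|\to 0$ for $\zeta<\zeta_c$, then letting $\zeta\uparrow\zeta_c$ --- are correct and coincide with the paper's. So the fix is to replace your ``stabilize then inject'' decomposition by ``eject the excess then stabilize.''
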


Notably, this last result shows that, at least in the case of directed
walks in dimension~$1$, for which it is known that there is no
fixation at criticality (see the remark following the statement of
Theorem~\ref{main_thm}), the sufficient
condition for activity given by Theorem~\ref{thm_condition_Mn} is not
necessary: in this case, at~$\zeta=\zeta_c$, the system is active despite the condition~\eqref{condition_Mn} not being satisfied.
That is to say, the system is active if and only
if~$\zeta\geq\zeta_c$,
whereas condition~\eqref{condition_Mn} is equivalent
to the strict inequality~$\zeta>\zeta_c$.

\subsection{Some perspectives}
\label{section_perspectives}

Since the seminal works which established general properties of
the phase transition, various techniques have been
developed to study Activated Random Walks.
In particular, a series of
works~\cite{RS12,ST18,ARS19,Taggi19,BGH18,HRR20,FG22,Hu22,AFG22}
established that the critical density
is always strictly between~$0$ and~$1$ and obtained bounds
on~$\zeta_c$ as a function of~$\lambda$.
But many of the techniques used only work far from criticality, when the
density
is either much larger or much smaller than~$\zeta_c$, and
few results have been proved to hold up to the critical
density.
For example,~\cite{BGHR19} shows that the model on the torus
stabilizes fast when~$\zeta$ is very small, and slowly when~$\zeta$ is
close to~$1$, but we lack sharper results about a transition exactly
at~$\zeta_c$ from fast to slow stabilization.

Some exceptions giving insight about the behaviour at or close
to~$\zeta_c$ are the study of the critical regime in the case of
directed walks in one dimension (see~\cite{CRS14}, with an argument
due to Hoffman and Sidoravicius, and~\cite{CR20}), the continuity
of~$\zeta_c$ as a function of~$\lambda$~\cite{Taggi23}, and the recent
work~\cite{JMT23} which considers the model on the complete graph and
computes the exact value of the critical density.

In this regard, the results of the present article have the merit to
hold up to the critical point.
However, the bounds presented here are far from being optimal, and there
remains a lot of space for improvement.
For example, Theorem~\ref{main_thm} can be seen as a partial answer to
the so-called hockey stick conjecture (conjecture~17\ in~\cite{LS24}),
which predicts that~$M_n/|V_n|$ should converge in probability
to~$\max(0,\,\zeta-\zeta_c)$, at least in the particular case when the
initial distribution is i.i.d.\ Poisson.

Similarly, the bound given in Theorem~\ref{thm_no_exit} is not optimal, and it is expected that when~$\zeta>\zeta_c$, the probability
that~$M_n=0$ in fact decays exponentially fast with~$n$ (see
conjecture~20 of~\cite{LS24}).

Note that Theorem~\ref{thm_no_exit} can also be seen as a partial
answer to the so-called ball conjecture (see conjectures~1 and~12
in~\cite{LS24}).
This conjecture predicts that, when starting
with~$n$ particles at the
origin, if we let these particles stabilize in~$\Z^d$, the random set
of visited sites~$A_n$ is such that, for every~$\varepsilon>0$, with
probability tending to~$1$ as~$n\to\infty$, the set~$A_n$ contains all
the sites of~$\Z^d$ that belong to the origin-centred Euclidean ball
of volume~$(1-\varepsilon)n/\zeta_c$ and is contained in the
origin-centred Euclidean ball of volume~$(1+\varepsilon)n/\zeta_c$.
Theorem~\ref{thm_no_exit} implies that the probability that~$A_n$ is
included into the ball of volume~$(1-\varepsilon)n/\zeta_c$ is less
than~$1-\varepsilon$.
Note that another partial result was obtained in this direction
in~\cite{LS21}, also
in dimension~$1$, showing an inner and an outer bound on~$A_n$.

Last but not least, the proofs of the present paper are very specific
to the one-dimensional case, and it would be interesting to obtain at
least similar results in higher dimension.
See more comments on this in Section~\ref{section-high-dim}.

\textbf{Update:} While this paper was under review, a new result
appeared~\cite{HJJ1,HJJ2} which proves the hockey stick conjecture and
the ball conjecture for activated random walk in dimension 1, showing
that the various definitions of the critical density coincide, and
strengthening the bounds of Theorems~\ref{thm_explicit}
and~\ref{thm_no_exit}.
Their technique involves a detailed study of the odometers and a
comparison with a percolation process, and it is very different from
the ideas of the present paper.
We thus hope that the ideas that we present here are nevertheless of
interest.

\section{Sketches of the proofs}
\label{section_sketches}

Let us now summarize the strategy of the proofs.
We start with Theorem~\ref{thm_no_exit}, before explaining how
the other results follow.

We will make extensive use of the abelian property of the model, which
allows us to choose the order with which the particles act: as
long as there is at least one active particle, we pick one with a
certain rule, and with probability~$\lambda/(1+\lambda)$ it falls
asleep if it is alone, whereas with probability~$1/(1+\lambda)$ it makes a
jump distributed according to the prescribed jump distribution~$P$.
The abelian property states that, if the randomness of these sleeps
and jumps is quenched into an array with a list of instructions above
each site, then once this array is fixed, the final stable
configuration and the number of instructions used at each site do not
depend on the
order with which the moves are performed.
The formalism of this quenched array of instructions is presented in
Section~\ref{section_tools}, where the abelian property corresponds to
Lemma~\ref{lemma_abelian}.

We also make use of the monotonicity property of the model with
respect to enforced activation.
This means that, stabilizing the
configuration by forcing sometimes sleeping particles to wake up, we
obtain
upper bounds on the number of instructions needed to
stabilize (see Lemma~\ref{lemma_monotonicity}).

\subsection{Sketch of the proof of Theorem~\ref{thm_no_exit}}
\label{subsec_sketch_no_exit}

Let~$\lambda>0$, let~$P$ be a nearest-neighbour translation-invariant jump distribution,
let~$\zeta>\zeta_c$ and~$n\geq 1$ and consider a fixed
deterministic initial configuration~$\eta:V_n\to\N$
with~$\|\eta\|=\zeta n$.
Let us write~$p=\PP_\eta(M_n=0)$.
This means that, when we stabilize the configuration~$\eta$ in~$V_n$
with particles ignored once they jump out of~$V_n$, with
probability~$p$ all the particles fall asleep inside~$V_n$ with no
particle jumping out: we call this a good stabilization.
The stabilization of~$\eta$ is called bad if at least one particle jumps out
of~$V_n$ (see first part of Figure~\ref{fig_eta}).
Our aim is to show that~$p\leq\zeta_c/\zeta$.

The strategy is the following: for every~$\zeta'<p\zeta$, we construct
a random initial configuration on~$\Z$ with particle density~$\zeta'$
and we show that this configuration fixates, which implies that~$\zeta'\leq\zeta_c$.
This being true for every~$\zeta'<p\zeta$, we deduce
that~$p\zeta\leq\zeta_c$.

\begin{figure}
\begin{center}
\begin{tikzpicture}[scale=.2]
% Upper rectangle
\filldraw[fill=gray!15,thick] (-32,10) rectangle (41,23);
\begin{scope}[shift={(-9,5)}]

% Initial config
\begin{scope}[shift={(-20,11.5)}]
\draweta{$\eta:V_n\to\N$ with\\
$\frac{\|\eta\|}n=\zeta>\zeta_c$}
\end{scope}

% Good stabilization
\begin{scope}[green!50!black]
\draw[ultra thick,->,line cap=round] (-7,13) --
node[midway,above]{with prob.~$p$} (9,14.5);
\begin{scope}[shift={(13,14)}]
\drawsocle{}
\foreach \y in {(0,1),(1,1),(3,1),(5,1),(6,1),(8,1),(9,1)}
{
\draw \y node{$\s$};
}
\end{scope}
\draw (24,15) node[right,align=left]{\textbf{Good stabilization:}\\
no particle jumps out of~$V_n$.};
\end{scope}

% Bad stabilization
\begin{scope}[red!50!black]
\draw[ultra thick,->,line cap=round] (-7,10) --
node[midway,below]{with prob.~$1-p$} (9,8.5);
\begin{scope}[shift={(13,7)}]
\drawsocle{}
\foreach \y in {(1,1),(3,1),(6,1),(8,1)}
{
\draw \y node{$\s$};
}
\filldraw (-1,1) circle(.4) (-1,2) circle(.4) (10,1) circle(.4);
\draw[very thick,->] (2,2.5) to[bend right] (-.7,2.6);
\draw[very thick,->] (8,2.5) to[bend left] (9.7,1.6);
\end{scope}
\draw (24,8) node[right,align=left]{\textbf{Bad stabilization:}\\
some particles jump out of~$V_n$.};
\end{scope}

\end{scope}

% Lower rectangle
%\draw[ultra thick,->,line cap=round] (4.5,14) -- (4.5,9);
\filldraw[thick,fill=gray!15] (-35,-6.5) rectangle (44,8);

% Copies of eta
\foreach \x in {-3,-1,1,2}
{
\begin{scope}[shift={(10*\x,0)}]
\drawcopyeta
\end{scope}
}

% Empty blocks
\foreach \x in {-2,0,3}
{
\begin{scope}[shift={(10*\x,0)}]
\draw[very thick,dotted,blue!50!white, line cap=round] (-.2,.3)
--node[midway,below]{empty block} (9.2,.3);
\end{scope}
}
\draw[blue!50!white] (4.5,.3) node{$\bullet$} node[above]{origin $0$};
\draw (-32,.3) node{\textbf{...}} (41,.3) node{\textbf{...}};

% Comments
\draw (4.5,6.5) node{Random block configuration on~$\Z$ with
particle density~$q\zeta$:};
\draw (4.5,-5) node{Each block starts with a copy
of~$\eta$ with probability~$q$.};
\draw (4.5,-9) node{$\longrightarrow$ If~$q<p$, then with positive probability, the origin is never visited, whence~$q\zeta\leq\zeta_c$.};

\end{tikzpicture}
\end{center}
\caption{
\label{fig_eta}
Outline of the proof of Theorem~\ref{thm_no_exit}: assuming that a
given deterministic initial configuration~\smash{$\eta:V_n\to\N$} produces a
good stabilization with probability~$p$, we build a block
configuration on~$\Z$ with density~$q\zeta$ which fixates if~$q<p$, showing
that~$q\zeta\leq \zeta_c$.
Since this holds for every~$q<p$, we
get~$p\zeta\leq\zeta_c$.
}
\end{figure}

The configuration that we consider is a ``block configuration''
represented in the second part of Figure~\ref{fig_eta} and constructed
as follows.
Let~$q=\zeta'/\zeta$.
For each block of the form~$V_n+kn$ for~$k\in\Z$, with probability~$q$
this block starts with a copy of the configuration~$\eta$, and with
probability~$1-q$ the block starts empty, independently for different
blocks.

Then, our strategy is to stabilize this configuration one block after
another, always choosing in priority an unstable block which is
as close as possible to the origin.
This unstable block, which contains a copy
of~$\eta$, gives a good stabilization with probability~$p$.
If this happens, we turn to the next block.
If a bad stabilization occurs, that is to say, if some particles
jump out of their block, then we force all the particles of this block
to walk, forbidding them to fall asleep, until we obtain again a copy
of~$\eta$ translated on the following block in the direction of the origin.
If this block in the direction of the origin was already occupied by
sleeping particles resulting from a previous good stabilization, then
we force these sleeping particles to wake up and walk until they form
a copy of~$\eta$ on the following block towards the origin, leaving
behind them a copy of~$\eta$.
And if again we arrive on a block already
occupied by sleeping particles, we repeat this displacement operation
until we go back to a situation where, in each block, we have either a
copy of~$\eta$ (active block) or only sleeping particles (stable
block).

We want to show that, with positive probability, the blocks can be
stabilized with this strategy with no particle ever visiting the
block of the origin.
To study this stabilization block by block, we couple the model with
another instance of activated
random walk, that we call ``coarse-grained model'', which has an initial
configuration i.i.d.\ Bernoulli with parameter~$q$ (each site
corresponds to one block in the original model), a sleep
rate~$\lambda'$ such that~$\lambda'/(1+\lambda')=p$ (a particle
falling asleep corresponds to the good stabilization of a block in the
original
model) and a jump distribution such that all sites~$x\neq 0$ jump
towards the origin with probability~$1$ (because at each bad
stabilization we force the particles to move towards the origin).

This coarse-graining coupling is illustrated in
Figure~\ref{fig_coupling} and formalized in
Proposition~\ref{prop_coupling}, which is postponed to
Section~\ref{section_no_exit} because it requires some
notation introduced in Section~\ref{section_tools}.
More precisely, this proposition shows that the original model and
the coarse-grained model can be coupled in such a way that, if the
origin is never
visited in the coarse-grained model, then the sites in the block of
the origin are never visited in the original model.

\begin{figure}
\begin{center}
\begin{tikzpicture}[scale=.147]

% Round 1

% Copies of eta
\foreach \x in {-3,-2,2}
{
\begin{scope}[shift={(10*\x,0)}]
\drawcopyeta
\end{scope}
\begin{scope}[shift={(60+5*\x,0)}]
\drawbigpart
\end{scope}
}

% Empty blocks
\foreach \x in {-1,0,1,3}
{
\begin{scope}[shift={(10*\x,0)}]
\draw[very thick,dotted,blue!50!white, line cap=round] (-.2,.3) --
node[midway,below]{empty} (9.2,.3);
\end{scope}
\begin{scope}[shift={(60+5*\x,0)}]
\drawbigvoid
\end{scope}
}

\draw[blue!50!white] (4.5,.3) node{$\bullet$} node[above]{origin $0$};

% Round 2
\begin{scope}[shift={(0,-10)}]

% Copies of eta
\foreach \x in {-3,2}
{
\begin{scope}[shift={(10*\x,0)}]
\drawcopyeta
\end{scope}
\begin{scope}[shift={(60+5*\x,0)}]
\drawbigpart
\end{scope}
}

% Stable blocks
\begin{scope}[green!50!black]
\foreach \x in {-2}
{
\begin{scope}[shift={(10*\x,0)}]
\drawsocle{stable}
\foreach \y in {0,1,3,4,6,7,8}
{
\draw (\y,1) node[scale=.8]{$\s$};
}
\end{scope}
\begin{scope}[shift={(60+5*\x,0)}]
\drawbigsleep
\end{scope}
}
\end{scope}

% Empty blocks
\foreach \x in {-1,0,1,3}
{
\begin{scope}[shift={(10*\x,0)}]
\draw[very thick,dotted,blue!50!white, line cap=round] (-.2,.3) --
node[midway,below]{empty} (9.2,.3);
\end{scope}
\begin{scope}[shift={(60+5*\x,0)}]
\drawbigvoid
\end{scope}
}

\end{scope}

% Round 3
\begin{scope}[shift={(0,-20)}]

% Copies of eta
\foreach \x in {-3}
{
\begin{scope}[shift={(10*\x,0)}]
\drawcopyeta
\end{scope}
\begin{scope}[shift={(60+5*\x,0)}]
%\drawbigpart
\end{scope}
}

% Stable blocks
\begin{scope}[green!50!black]
\foreach \x in {-2}
{
\begin{scope}[shift={(10*\x,0)}]
\drawsocle{stable}
\foreach \y in {0,1,3,4,6,7,8}
{
\draw (\y,1) node[scale=.8]{$\s$};
}
\end{scope}
\begin{scope}[shift={(60+5*\x,0)}]
%\drawbigsleep
\end{scope}
}
\end{scope}

% Bad stabilizations
\begin{scope}[red!50!black]
\foreach \x in {2}
{
\begin{scope}[shift={(10*\x,0)}]
\drawsocle{bad stab.}
\foreach \y in {(-1,1),(10,2),(10,1)}
{
\filldraw \y circle(.4);
}
\foreach \y in {2,3,5,8}
{
\draw (\y,1) node[scale=.8]{$\s$};
}
\draw[ultra thick,->] (6,2.5) to[bend left] (9.7,2.6);
\draw[ultra thick,->] (3,2.5) to[bend right] (-.7,1.6);
\end{scope}
}
\end{scope}

% Empty blocks
\foreach \x in {-1,0}
{
\begin{scope}[shift={(10*\x,0)}]
\draw[very thick,dotted,blue!50!white, line cap=round] (-.2,.3) --
node[midway,below]{empty} (9.2,.3);
\end{scope}
\begin{scope}[shift={(60+5*\x,0)}]
%\drawbigvoid
\end{scope}
}
\foreach \x in {1,3}
{
\begin{scope}[shift={(10*\x,0)}]
\draw[very thick,dotted,blue!50!white, line cap=round] (-.2,.3) -- (9.2,.3);
\end{scope}
\begin{scope}[shift={(60+5*\x,0)}]
%\drawbigvoid
\end{scope}
}

\end{scope}

% Round 4
\begin{scope}[shift={(0,-30)}]

% Copies of eta
\foreach \x in {-3,1}
{
\begin{scope}[shift={(10*\x,0)}]
\drawcopyeta
\end{scope}
\begin{scope}[shift={(60+5*\x,0)}]
\drawbigpart
\end{scope}
}

% Stable blocks
\begin{scope}[green!50!black]
\foreach \x in {-2}
{
\begin{scope}[shift={(10*\x,0)}]
\drawsocle{stable}
\foreach \y in {0,1,3,4,6,7,8}
{
\draw (\y,1) node[scale=.8]{$\s$};
}
\end{scope}
\begin{scope}[shift={(60+5*\x,0)}]
\drawbigsleep
\end{scope}
}
\end{scope}

% Empty blocks
\foreach \x in {-1,0,2,3}
{
\begin{scope}[shift={(10*\x,0)}]
\draw[very thick,dotted,blue!50!white, line cap=round] (-.2,.3) --
node[midway,below]{empty} (9.2,.3);
\end{scope}
\begin{scope}[shift={(60+5*\x,0)}]
\drawbigvoid
\end{scope}
}

\end{scope}

% Round 5
\begin{scope}[shift={(0,-40)}]

% Copies of eta
\foreach \x in {-3}
{
\begin{scope}[shift={(10*\x,0)}]
\drawcopyeta
\end{scope}
\begin{scope}[shift={(60+5*\x,0)}]
\drawbigpart
\end{scope}
}

% Stable blocks
\begin{scope}[green!50!black]
\foreach \x in {-2}
{
\begin{scope}[shift={(10*\x,0)}]
\drawsocle{stable}
\foreach \y in {0,1,3,4,6,7,8}
{
\draw (\y,1) node[scale=.8]{$\s$};
}
\end{scope}
\begin{scope}[shift={(60+5*\x,0)}]
\drawbigsleep
\end{scope}
}
\end{scope}

\begin{scope}[green!50!black]
\foreach \x in {1}
{
\begin{scope}[shift={(10*\x,0)}]
\drawsocle{stable}
\foreach \y in {1,2,3,4,6,8,9}
{
\draw (\y,1) node[scale=.8]{$\s$};
}
\end{scope}
\begin{scope}[shift={(60+5*\x,0)}]
\drawbigsleep
\end{scope}
}
\end{scope}

% Empty blocks
\foreach \x in {-1,0,2,3}
{
\begin{scope}[shift={(10*\x,0)}]
\draw[very thick,dotted,blue!50!white, line cap=round] (-.2,.3) --
node[midway,below]{empty} (9.2,.3);
\end{scope}
\begin{scope}[shift={(60+5*\x,0)}]
\drawbigvoid
\end{scope}
}

\end{scope}

% Round 6
\begin{scope}[shift={(0,-50)}]

% Copies of eta
\foreach \x in {}
{
\begin{scope}[shift={(10*\x,0)}]
\drawcopyeta
\end{scope}
\begin{scope}[shift={(60+5*\x,0)}]
%\drawbigpart
\end{scope}
}

% Bad stabilizations
\begin{scope}[red!50!black]
\foreach \x in {-3}
{
\begin{scope}[shift={(10*\x,0)}]
\drawsocle{bad stab.}
\foreach \y in {(-1,1),(10,2)}
{
\filldraw \y circle(.4);
}
\foreach \y in {1,4,5,6,7}
{
\draw (\y,1) node[scale=.8]{$\s$};
}
\draw[ultra thick,->] (6,2.5) to[bend left] (9.7,2.5); 
\draw[ultra thick,->] (3,2.5) to[bend right] (-.7,1.6);
\end{scope}
}
\end{scope}

% Stable blocks
\begin{scope}[green!50!black]
\foreach \x in {-2}
{
\begin{scope}[shift={(10*\x,0)}]
\drawsocle{disturbed}
\foreach \y in {1,3,4,6,7,8}
{
\draw (\y,1) node[scale=.8]{$\s$};
}
\filldraw (0,1) circle(.4);
\end{scope}
\begin{scope}[shift={(60+5*\x,0)}]
%\drawbigsleep
\end{scope}
}
\end{scope}

\begin{scope}[green!50!black]
\foreach \x in {1}
{
\begin{scope}[shift={(10*\x,0)}]
\drawsocle{stable}
\foreach \y in {1,2,3,4,6,8,9}
{
\draw (\y,1) node[scale=.8]{$\s$};
}
\end{scope}
\begin{scope}[shift={(60+5*\x,0)}]
%\drawbigsleep
\end{scope}
}
\end{scope}

% Empty blocks
\foreach \x in {-1,0,2,3}
{
\begin{scope}[shift={(10*\x,0)}]
\draw[very thick,dotted,blue!50!white, line cap=round] (-.2,.3) --
node[midway,below]{empty} (9.2,.3);
\end{scope}
\begin{scope}[shift={(60+5*\x,0)}]
%\drawbigvoid
\end{scope}
}

\end{scope}

% Round 7
\begin{scope}[shift={(0,-60)}]

% Copies of eta
\foreach \x in {-2}
{
\begin{scope}[shift={(10*\x,0)}]
\draweta{$2\|\eta\|$ part.}
\foreach \y in {(0,1),(1,3),(3,1),(4,1),(6,4),(7,1),(8,2)}
{
\filldraw \y circle(.4);
}

\end{scope}
\begin{scope}[shift={(60+5*\x,0)}]
\drawtwobigpart
\end{scope}
}

% Stable blocks
\begin{scope}[green!50!black]
\foreach \x in {1}
{
\begin{scope}[shift={(10*\x,0)}]
\drawsocle{stable}
\foreach \y in {1,2,3,4,6,8,9}
{
\draw (\y,1) node[scale=.8]{$\s$};
}
\end{scope}
\begin{scope}[shift={(60+5*\x,0)}]
\drawbigsleep
\end{scope}
}
\end{scope}

% Empty blocks
\foreach \x in {-3,-1,0,2,3}
{
\begin{scope}[shift={(10*\x,0)}]
\draw[very thick,dotted,blue!50!white, line cap=round] (-.2,.3) --
node[midway,below]{empty} (9.2,.3);
\end{scope}
\begin{scope}[shift={(60+5*\x,0)}]
\drawbigvoid
\end{scope}
}

\end{scope}

% Round 8
\begin{scope}[shift={(0,-70)}]

% Copies of eta
\foreach \x in {-2,-1}
{
\begin{scope}[shift={(10*\x,0)}]
\drawcopyeta
\end{scope}
\begin{scope}[shift={(60+5*\x,0)}]
\drawbigpart
\end{scope}
}

% Stable blocks
\begin{scope}[green!50!black]
\foreach \x in {1}
{
\begin{scope}[shift={(10*\x,0)}]
\drawsocle{stable}
\foreach \y in {1,2,3,4,6,8,9}
{
\draw (\y,1) node[scale=.8]{$\s$};
}
\end{scope}
\begin{scope}[shift={(60+5*\x,0)}]
\drawbigsleep
\end{scope}
}
\end{scope}

% Empty blocks
\foreach \x in {-3,0,2,3}
{
\begin{scope}[shift={(10*\x,0)}]
\draw[very thick,dotted,blue!50!white, line cap=round] (-.2,.3) --
node[midway,below]{empty} (9.2,.3);
\end{scope}
\begin{scope}[shift={(60+5*\x,0)}]
\drawbigvoid
\end{scope}
}

\end{scope}

% Round 9
\begin{scope}[shift={(0,-80)}]

% Copies of eta
\foreach \x in {-2}
{
\begin{scope}[shift={(10*\x,0)}]
\drawcopyeta
\end{scope}
\begin{scope}[shift={(60+5*\x,0)}]
\drawbigpart
\end{scope}
}

% Stable blocks
\begin{scope}[green!50!black]
\foreach \x in {-1}
{
\begin{scope}[shift={(10*\x,0)}]
\drawsocle{stable}
\foreach \y in {0,1,3,4,6,8,9}
{
\draw (\y,1) node[scale=.8]{$\s$};
}
\end{scope}
\begin{scope}[shift={(60+5*\x,0)}]
\drawbigsleep
\end{scope}
}
\foreach \x in {1}
{
\begin{scope}[shift={(10*\x,0)}]
\drawsocle{stable}
\foreach \y in {1,2,3,4,6,8,9}
{
\draw (\y,1) node[scale=.8]{$\s$};
}
\end{scope}
\begin{scope}[shift={(60+5*\x,0)}]
\drawbigsleep
\end{scope}
}
\end{scope}

% Empty blocks
\foreach \x in {-3,0,2,3}
{
\begin{scope}[shift={(10*\x,0)}]
\draw[very thick,dotted,blue!50!white, line cap=round] (-.2,.3) --
node[midway,below]{empty} (9.2,.3);
\end{scope}
\begin{scope}[shift={(60+5*\x,0)}]
\drawbigvoid
\end{scope}
}

\end{scope}

% Round 10
\begin{scope}[shift={(0,-90)}]

% Stable blocks
\begin{scope}[green!50!black]
\foreach \x in {-2}
{
\begin{scope}[shift={(10*\x,0)}]
\drawsocle{stable}
\foreach \y in {1,2,3,4,5,6,9}
{
\draw (\y,1) node[scale=.8]{$\s$};
}
\end{scope}
\begin{scope}[shift={(60+5*\x,0)}]
\drawbigsleep
\end{scope}
}
\foreach \x in {-1}
{
\begin{scope}[shift={(10*\x,0)}]
\drawsocle{stable}
\foreach \y in {0,1,3,4,6,8,9}
{
\draw (\y,1) node[scale=.8]{$\s$};
}
\end{scope}
\begin{scope}[shift={(60+5*\x,0)}]
\drawbigsleep
\end{scope}
}
\foreach \x in {1}
{
\begin{scope}[shift={(10*\x,0)}]
\drawsocle{stable}
\foreach \y in {1,2,3,4,6,8,9}
{
\draw (\y,1) node[scale=.8]{$\s$};
}
\end{scope}
\begin{scope}[shift={(60+5*\x,0)}]
\drawbigsleep
\end{scope}
}
\end{scope}

% Empty blocks
\foreach \x in {-3,0,2,3}
{
\begin{scope}[shift={(10*\x,0)}]
\draw[very thick,dotted,blue!50!white, line cap=round] (-.2,.3) --
node[midway,below]{empty} (9.2,.3);
\end{scope}
\begin{scope}[shift={(60+5*\x,0)}]
\drawbigvoid
\end{scope}
}

\end{scope}

% Intermediate comments
\draw[green!50!black] (-15.5,-5) node{good stab.\ of~$B_{-2}$} (52,-5)
node{sleep at~$-2$};
\draw[red!50!black] (24.5,-15) node{bad stab.\ of~$B_{2}$} (72,-15)
node{left jump at~$2$} (19.5,-25) node{forced move to~$B_1$};
\draw[green!50!black] (14.5,-35) node{good stab.\ of~$B_1$} (67,-35)
node{sleep at~$1$};
\draw[red!50!black] (-21.5,-45) node{bad stab.\ of~$B_{-3}$} (47,-45)
node{right jump at~$-3$} (-20,-55) node{forced move to~$B_{-2}$};
\draw (-10.5,-65) node{forced move to~$B_{-1}$} (52,-65)
node{(sleep(s) then) right jump at~$-2$};
\draw[green!50!black] (-5.5,-75) node{good stab.\ of~$B_{-1}$} (57,-75)
node{sleep at~$-1$};
\draw[green!50!black] (-15.5,-85) node{good stab.\ of~$B_{-2}$} (52,-85)
node{sleep at~$-2$};

% Names of blocks and sites
\draw (-25.5,6) node{$B_{-3}$};
\draw (-15.5,6) node{$B_{-2}$};
\draw (-5.5,6) node{$B_{-1}$};
\draw (4.5,6) node{$B_{0}$};
\draw (14.5,6) node{$B_{1}$};
\draw (24.5,6) node{$B_{2}$};
\draw (34.5,6) node{$B_{3}$};
\draw (4.5,9) node{\underline{Block configuration}};

\draw (47,6) node{-3};
\draw (52,6) node{-2};
\draw (57,6) node{-1};
\draw (62,6) node{0};
\draw (67,6) node{1};
\draw (72,6) node{2};
\draw (77,6) node{3};
\draw (62,9) node{\underline{Coarse-grained configuration}};

\end{tikzpicture}
\end{center}
\caption{
\label{fig_coupling}
Coupling between the stabilization of the block configuration, on the
left side, and the stabilization of a coarse-grained configuration, on
the right side (we refer to Section~\ref{subsec_sketch_no_exit} for an
informal sketch and to Proposition~\ref{prop_coupling} for a more
detailed presentation of the coupling).
}
\end{figure}

Once this coupling is established, we use the following result
of Rolla and Sidoravicius:

\begin{theorem}[Theorem~2 in~\cite{RS12}]
\label{lemmaRS}
For every~$\lambda>0$, every nearest-neighbour jump distribution~$P$
and every~$q<\lambda/(1+\lambda)$, in the model with sleep
rate~$\lambda$, jump
distribution~$P$ and initial
configuration i.i.d.\ with mean particle density~$q$, with positive
probability the
origin is never visited.
\end{theorem}

In fact this statement is not exactly Theorem~2 in~\cite{RS12}, but it
directly follows
from the proof (which by the way is stated for a Poisson initial
distribution, but holds for any i.i.d.\ initial distribution).

We apply this result to the coarse-grained model, which has particle
density~$q$ and sleep rate~$\lambda'$ such
that~$\lambda'/(1+\lambda')=p$.
Since~$q<p$, we deduce that with positive probability the origin is never
visited in the coarse-grained model, which implies that with positive
probability the block of the origin is never visited in the original
model, which
implies that the particle density in the block configuration,
namely~$q\zeta=\zeta'$, is at most~$\zeta_c$.

A small issue that we just swept under the carpet is that the
block configuration is not translation-invariant, but this problem is
easily overcome by applying a random translation, as explained in Section~\ref{section_offset}.
Interestingly, our technique is one of the rare applications of the
universality result of Theorem~\ref{thm_phase_transition} to an
initial configuration which is not i.i.d.\ but only
translation-invariant and ergodic.

The proof of Theorem~\ref{thm_no_exit}, with the statement and
proof of Proposition~\ref{prop_coupling} which establishes the
coupling, is the content of Section~\ref{section_no_exit}.

\subsection{Proving that Theorem~\ref{thm_no_exit} implies
Theorem~\ref{thm_explicit}}

To deduce Theorem~\ref{thm_explicit} from Theorem~\ref{thm_no_exit}, the
idea is that, if less than~$\varepsilon n$ particles jump out of the segment,
then, taking a slightly larger segment, with enough empty space around
to easily accommodate these particles which jumped out, we can stabilize
this larger segment with no particles jumping out of it.

We proceed in two steps, as represented in
Figure~\ref{fig_thm_explicit}.
We start with a configuration~$\eta:V_n\to\N$ with
density~$\|\eta\|/n=\zeta>\zeta_c$ and empty strips of
length~$2\alpha n$ on each side of~$V_n$.

\begin{figure}
\begin{center}
\begin{tikzpicture}[scale=0.2,decoration={brace,mirror,amplitude=5}]

% Dimensions
\draw[thick, dashed,<->,gray!70!black] (-.5,7) --
node[midway,above]{$n$} (19.5,7);
\foreach \x in {-5,-10,20,25}
{
\begin{scope}[shift={(\x,0)}]
\draw[thick, dashed,<->,blue!50] (-.4,7) --node[midway,above]{$\alpha
n$} (4.4,7);
\end{scope}
}

% Initial configuration
\filldraw[thick,fill=gray!15] (-12,-2.5) rectangle (31,5.5);
\draw[very thick] (-.3,.3) -- node[midway,below,align=center]{initial
configuration~$\eta$}
(19.3,.3);
\foreach \y in
{(1,1),(2,1),(2,2),(2,3),(4,1),(6,1),(6,2),(10,1),(11,1),(12,1),(14,1),(14,2),(18,1),(18,2)}
{
\filldraw \y circle(.4);
}

\draw[very thick,dotted,blue!50, line cap=round] (-.6,.3)
-- node[below,align=center]{empty}(-10.4,.3) (19.6,.3) -- node[below,align=center]{empty}(29.4,.3);

% Step 1
\draw[blue!50,decorate,very thick] (-5.4,-3.5) --node[below=3pt]{sleeps ignored}
(-.6,-3.5);
\draw[blue!50,decorate,very thick] (19.6,-3.5) --node[below=3pt]{sleeps ignored}
(24.4,-3.5);

\draw[ultra thick, red!50!black, line cap=round, ->] (32,-2)
to[bend left] node[right,align=left]
{\textbf{Step 1:}\\
Stabilize~$V_{n+2\alpha n}$ with no particle\\
allowed to sleep in~$V_{n+2\alpha n}\setminus V_n$.}
(32,-8);

% Result of step 1
\begin{scope}[shift={(0,-15)}]
\filldraw[thick,fill=gray!15] (-12,-1.5) rectangle (31,7.5);
\draw[very thick, green!50!black] (-.3,.3) -- (19.3,.3);
\foreach \y in {(3,1),(4,1),(6,1),(9,1),(13,1),(18,1)}
{
\draw[green!50!black] \y node{$\s$};
}
\draw[very thick,dotted,blue!50, line cap=round] (-.6,.3)
-- (-10.4,.3) (19.6,.3) --
 (29.4,.3);

\begin{scope}[red!50!black]
\foreach \x in {1,...,5}
{
\fill (-6,\x) circle(.4);
}
\foreach \x in {1,...,3}
{
\fill (25,\x) circle(.4);
}

\draw[very thick,->] (0,3) to[bend right] (-4.7,4);
\draw[very thick,->] (19,2.5) to[bend left] (23.7,3.5);
\draw (9.5,5.5) node{$M'_n$ particles jumped out
of~$V_{n+2\alpha n}$.};
\end{scope}
\end{scope}

% Step 2
\draw[ultra thick, green!50!black, line cap=round, ->] (32,-16)
to[bend left] node[right,align=left]
{\textbf{Step 2:}\\
Try to stabilize~$V_{n+4\alpha n}\setminus V_n$,\\
hoping that no particle comes\\
back in~$V_n$ or exits~$V_{n+4\alpha n}$.}
(32,-22);

\draw[blue!50,decorate,very thick] (-10.4,-17.5) --node[below=3pt]{sleeps
allowed}
(-.6,-17.5);
\draw[blue!50,decorate,very thick] (19.6,-17.5) --node[below=3pt]{sleeps allowed}
(29.4,-17.5);

% Result of step 2
\begin{scope}[shift={(0,-29)}]
\filldraw[thick,fill=gray!15] (-12,-1.5) rectangle (31,7.5);

\begin{scope}[green!50!black]
\draw[very thick]
(-.3,.3) -- (19.3,.3)
(-.6,.3) -- (-10.4,.3)
(19.6,.3) -- (29.4,.3);

\foreach \y in {-10,-8,-4,-2,-1,3,4,6,9,13,18,21,28,29}
{
\draw (\y,1) node{$\s$};
}

\draw[very thick,->] (-6.4,4) to[bend right] (-9.7,1.5);
\draw[very thick,->] (-6.4,1) to[bend right] (-7.7,1.5);
\draw[very thick,->] (-5.6,5) to[bend left] (-1.3,1.5);
\draw[very thick,->] (-5.6,3) to[bend left] (-2.3,1.5);
\draw[very thick,->] (-5.6,2) to[bend left] (-4.3,1.5);

\draw[very thick,->] (24.6,2) to[bend right] (21.3,1.5);
\draw[very thick,->] (25.4,3) to[bend left] (28.7,1.5);
\draw[very thick,->] (25.4,1) to[bend left] (27.7,1.5);

\draw (9.5,5) node
{$\eta$ is stabilized in~$V_{n+4\alpha n}$.};
\end{scope}

\foreach \x in {1,...,5}
{
\draw[gray,densely dotted,thick] (-6,\x) circle(.4);
}
\foreach \x in {1,...,3}
{
\draw[gray,densely dotted,thick] (25,\x) circle(.4);
}
\end{scope}

\end{tikzpicture}
\end{center}
\caption{
\label{fig_thm_explicit}
The two steps of the proof of Theorem~\ref{thm_explicit}.}
\end{figure}

During the first step, we stabilize~$V_{n+2\alpha n}$, forbidding
particles to fall asleep out of~$V_n$, and freezing particles once
they jump out of~$V_{n+2\alpha n}$.
The number of particles which jump out of~$V_{n+2\alpha n}$ during
this step, which we
denote by~$M'_n$, may differ from~$M_n$, which is the number of
particles jumping out of~$V_n$ when we just stabilize~$V_n$.
Yet, it turns out that~$M'_n$ is stochastically
dominated by~$M_n$.
This is the content of Lemma~\ref{lemma_NML}:
adding these no man's lands around~$V_n$ does not increase, in
distribution, the number of particles which jump out.

Then, during the second step, we try to
stabilize these~$M'_n$ particles in~$V_{n+4\alpha n}\setminus V_n$.
This step is said to be successful if, doing so, no
particles jump out of~$V_{n+4\alpha n}$ or come back
in~$V_n$.
In this case, we managed to stabilize the configuration~$\eta$
in~$V_{n+4\alpha n}$ with no
particle jumping out of~$V_{n+4\alpha n}$, forbidding some
particles to fall asleep at some stages.
By the monotonicity property with respect to enforced activation, this
implies that no particle would have jumped out of~$V_{n+4\alpha n}$
also if we had not forbidden these particles to fall asleep, that is
to say,~$M_{n+4\alpha n}=0$.

Note that the overall density in the enlarged segment
with the empty spaces around is~$\zeta/(1+4\alpha)$.
Thus, if~$\alpha$ is chosen small enough so
that~$\zeta/(1+4\alpha)>\zeta_c$, then Theorem~\ref{thm_no_exit} gives
an upper bound on the probability that~$M_{n+4\alpha n}=0$.

Then, if we consider~$\varepsilon>0$ small enough so
that~$\varepsilon/\alpha<\lambda/(1+\lambda)$, then we can show that
the second stage succeeds with high probability, conditioned on the
event that~$M'_n\leq\varepsilon n$ (to show this we adapt the trapping
procedure which is used in~\cite{RS12} to obtain Theorem~\ref{lemmaRS}).
Thus, we can translate the upper bound on~$\PP_\eta\big(M_{n+4\alpha n}=0\big)$ coming from Theorem~\ref{thm_no_exit} into an
upper bound on~$\PP_\eta(M'_n\leq \varepsilon n)$.
The stochastic domination given by Lemma~\ref{lemma_NML} then allows
us to translate this into the claimed upper bound
on~$\PP_\eta(M_n\leq\varepsilon n)$.

The proof of Theorem~\ref{thm_explicit} is the object of
Section~\ref{section_explicit}.

\subsection{Obtaining Theorem~\ref{main_thm} and
Proposition~\ref{fact_critical}}

Given Theorem~\ref{thm_explicit}, our Theorem~\ref{main_thm} easily
follows. 
The only detail is that, to show that we have indeed an
equivalence, there remains to show Proposition~\ref{fact_critical}, which
states that, at criticality, there is not a positive fraction which
jumps out of the box.
This is presented in Section~\ref{section_main}.

\subsection{Higher dimension or longer jumps}
\label{section-high-dim}

As explained before, our proof strategy beaks down in higher dimension
or when particles can make longer jumps.
More precisely, there are two stages where we crucially rely on the
fact that we are in dimension~1 and that jumps are limited to the
nearest neighbours.

First, we use these two assumptions in the construction of the
coarse-graining coupling given by Proposition~\ref{prop_coupling}
when, after the bad stabilization of a block, we force the particles
to move to form a new copy of the configuration~$\eta$ on the
following block in the direction of the origin.
This can be done without
disturbing the sleeping particles which are on other blocks closer to
the origin, but only because we are in dimension~1 with
nearest-neighbour jumps (otherwise the particles could go around or
skip the site where we want to bring them).

These two assumptions are also used in the proof of
Lemma~\ref{lemma_NML}, which shows that adding no man's lands
around a segment does not increase, in distribution, the number of
particles which jump out.
Indeed, the proof of this Lemma uses the fact that, if you stabilize
a segment by always
toppling the leftmost active site, then each time that a particle
jumps out by the left exit, all the other particles must be active.
In higher dimension or with longer jumps this would no longer be the
case, 
but it would be interesting to investigate whether or not a result
similar to Lemma~\ref{lemma_NML} would still hold.

\section{The site-wise representation of the model}
\label{section_tools}

We now describe the site-wise representation of the model, with an
array of sleep and jump instructions above the sites.
We refer to the survey~\cite{Rolla20} for a more detailed
presentation.

\subsection{Topplings and odometers}
\label{site_wise}

A crucial ingredient in the study of Activated Random Walks is the
site-wise representation, also known as Diaconis-Fulton
representation~\cite{DF91,RS12}.
Let~$\eta:\Z^d\to\N\cup\{\s\}$ be a fixed initial
configuration, and let us consider a fixed array of
instructions~$\tau=(\tau_{x,i})_{x\in\Z^d,\,i\geq 1}$ where, for
every~$x\in\Z^d$ and every~$i\geq 1$, the instruction~$\tau_{x,i}$
can be either a sleep instruction or a jump instruction to some
site~$y\in\Z^d$.

The idea is that, once this initial configuration and this array of
instructions are fixed, the evolution of the system can be constructed by
looking at these instructions each time that something happens at some
site.
As we use instructions of the array, we keep track of which
instructions have already been used, with the help of a function called the
odometer~$h:\Z^d\to\N$, which counts, at each site, how many
instructions have already been used.

When we use an instruction at a site~$x$, we say that we topple~$x$.
For a given fixed configuration~$\eta$, we
say that it is legal (respectively, acceptable) for~$\eta$ to
topple a site~$x$ if~$x$ contains at least one active
particle (respectively, at least one particle) in~$\eta$.

If a toppling is legal or acceptable, then this toppling consists in
using the next
instruction~$\tau_{x,\,h(x)+1}$ to update the
configuration~$\eta$: if this instruction is a sleep instruction, then
the particle at~$x$ falls asleep if it is alone (whereas nothing happens
if there are at least two particles at~$x$), and if it is a jump
instruction to another site~$y$, one particle at~$x$ jumps to site~$y$, waking up the
sleeping particle there if there is one.
If the toppling was only acceptable but not legal, we first wake up
the particle at~$x$ before applying the toppling.
The resulting configuration is denoted by~$\tau_{x,\,h(x)+1}\eta$.
Thus, for a fixed realization of the array~$\tau$, the toppling at a
site~$x$ consists of an operator
$$\Phi_x^\tau\ :\ (\eta,\,h)\ \longmapsto\
(\tau_{x,\,h(x)+1}\eta,\,h+\delta_x)\,,$$
which is only defined if the toppling is acceptable.

If~$\alpha=(x_1,\,\ldots,\,x_k)$ is a sequence of sites
of~$\Z^d$, we say that the toppling sequence~$\alpha$ is~$\tau$-legal
(resp.,~$\tau$-acceptable)
for~$(\eta,\,h)$ if for every~$i\in\{1,\,\ldots,\,k\}$, it is legal
(resp., acceptable)
for~\smash{$\Phi_{x_{i-1}}^\tau\circ\cdots\circ\Phi_{x_2}^\tau\circ\Phi_{x_1}^\tau(\eta,\,h)$} to
topple~$x_i$, that is to say, if the configuration resulting from the
first~$i-1$ topplings has at least one active particle (resp., at
least one particle) on the site~$x$.
If~$\alpha$ is acceptable, applying the toppling sequence~$\alpha$
means
applying~\smash{$\Phi^\tau_\alpha=\Phi^\tau_{x_k}\circ\cdots\circ\Phi^\tau_{x_1}$}.
We define the odometer of a toppling sequence~$\alpha$
as~$m_\alpha=\delta_{x_1}+\cdots+\delta_{x_k}$, which simply counts
how many times each site appears in the sequence~$\alpha$.
We also define, for every~$V\subset\Z^d$,
\begin{equation}
\label{def_mVeta}
m_{V,\,\eta}^\tau\ =\ \sup_{\alpha\subset V,\ \alpha\text{
is~$\tau$-legal for }\eta}
m_\alpha\,,
\end{equation}
where the notation~$\alpha\subset V$ means that all
the sites appearing in~$\alpha$ must belong to~$V$.
The total stabilization odometer associated with the
configuration~$\eta$ is defined as:
\begin{equation}
\label{def_m_eta}
m_{\eta}^\tau
\ =\ 
\sup_{\alpha\text{ is~$\tau$-legal for }\eta}
m_\alpha
\ =\ 
\sup_{V\subset\Z^d}
m_{V,\,\eta}^\tau
\,.
\end{equation}

\subsection{Abelian property and monotonicity}

An important advantage of the site-wise construction is the following
property, which states that the order with which we perform the
topplings is irrelevant, allowing us to use whatever convenient
strategy to choose which sites to topple. 
We say that a sequence of topplings stabilizes~$\eta$ in~$V$
if the configuration resulting from the application of the toppling
sequence is stable in~$V$, meaning that there are no active particles
in~$V$.

\begin{lemma}[Abelian property, Lemma~2 in~\cite{RS12}]
\label{lemma_abelian}
If~$\alpha$ and~$\beta$ are both legal toppling sequences for~$\eta$
that are contained in~$V$ and stabilize~$\eta$ in~$V$,
then~$m_\alpha=m_\beta=m_{V,\,\eta}^\tau$ and the resulting configurations
are equal, that is to
say,~$\Phi^\tau_\alpha(\eta,\,0)=\Phi^\tau_\beta(\eta,\,0)$.
\end{lemma}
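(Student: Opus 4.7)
The plan is to prove the Abelian property by the classical Diaconis--Fulton argument, adapted to ARW. The first ingredient is a \emph{local commutativity} lemma: if $x\ne y$ and the topplings $\Phi_x$ and $\Phi_y$ are both legal at $(\eta,h)$, then $\Phi_x\Phi_y(\eta,h)=\Phi_y\Phi_x(\eta,h)$. Since $x\ne y$, the two orderings consume the same pair of instructions $\tau_{x,h(x)+1}$ and $\tau_{y,h(y)+1}$, and the odometer increment $\delta_x+\delta_y$ is trivially symmetric; the commutativity of the configuration update then reduces to a short case analysis on the type of each instruction (sleep or jump) and, for jumps, on whether the target is the other toppled site. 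The assumption that both topplings are legal ensures active particles at $x$ and $y$, eliminating the ``wake-up-first'' subcase and keeping the verification routine.

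The main step is the \emph{least action principle}: for any legal sequence $\alpha$ for $\eta$ contained in $V$ and any legal stabilizing sequence $\beta$ for $\eta$ in $V$, one has $m_\alpha\leqslant m_\beta$ pointwise. I would prove this by induction on $|\alpha|$. Writing $\alpha=(x_1,\alpha')$, the site $x_1$ is active in $\eta$, so $\Phi_{x_1}$ is legal at $(\eta,0)$. The goal is to construct from $\beta$ a legal stabilizing sequence $\widetilde\beta$ for $\Phi_{x_1}(\eta,0)$ in $V$ with $m_{\widetilde\beta}\leqslant m_\beta-\delta_{x_1}$, i.e.\ to cancel one toppling at $x_1$ in $\beta$ against the freshly performed $\Phi_{x_1}$. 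A direct bubble-sort of $\beta$ (swapping the first $x_1$-entry to the front using local commutativity) is not always legal, because the legality of later $x_1$-topplings in $\beta$ may depend on active particles produced by preceding jumps from neighbouring sites; instead one argues by a parallel induction on $|\beta|$, using local commutativity where it applies and otherwise exploiting that $\Phi_{x_1}$ was \emph{already} legal at $\eta$ to transfer the cancellation to the front. Applying the induction hypothesis to $\alpha'$ and $\widetilde\beta$ yields $m_{\alpha'}\leqslant m_{\widetilde\beta}$, hence $m_\alpha=\delta_{x_1}+m_{\alpha'}\leqslant m_\beta$.

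With the least action principle established, applying it twice by swapping the roles of $\alpha$ and $\beta$ (both hypothesized to stabilize $\eta$ in $V$) gives $m_\alpha=m_\beta$, and the definition of $m_{V,\eta}$ as a supremum over legal sequences in $V$ then forces $m_\alpha=m_\beta=m_{V,\eta}$. Equality of the resulting configurations follows by one further induction on the common length of $\alpha$ and $\beta$: with equal odometers the two sequences consume exactly the same multiset of instructions, and local commutativity lets one reorder one into the other step by step while preserving the state. The main obstacle is the cancellation argument inside the least action step, where the subtlety is precisely that active particles at $x_1$ can be created dynamically along $\beta$, so commutativity cannot be invoked blindly and a more careful parallel induction is needed; once this is in place, the remaining deductions are straightforward.
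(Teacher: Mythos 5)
The paper does not actually prove this lemma: it is quoted as Lemma~2 of \cite{RS12} and used as a black box, so there is no in-paper proof to compare against. Your proposal follows the standard Diaconis--Fulton route (local commutativity, then the least-action principle by induction on the length of~$\alpha$, then equality of odometers and of final configurations), which is indeed how it is done in \cite{RS12} and in the survey \cite{Rolla20}, so the overall architecture is correct.

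One point deserves pushback, however. You assert that a direct bubble-sort of~$\beta$ --- moving the first occurrence of~$x_1$ in~$\beta$ to the front by repeated adjacent swaps --- ``is not always legal,'' and you gesture at an unspecified ``parallel induction on~$|\beta|$'' as a fix. In fact the bubble-sort is legal, and the reason is precisely the observation you mention almost in passing: $\Phi_{x_1}$ is already legal at~$\eta$. Let~$i$ be the index of the first occurrence of~$x_1$ in~$\beta$. None of~$b_1,\ldots,b_{i-1}$ equals~$x_1$, and a toppling at a site~$y\neq x_1$ can only \emph{add} particles to~$x_1$ (by a jump instruction pointed at~$x_1$) or leave~$x_1$ untouched; in particular it can never remove the active particle that~$x_1$ has in~$\eta$. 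Hence~$x_1$ still carries an active particle after~$b_1,\ldots,b_{j}$ for every~$j<i$, so at each adjacent swap both topplings are legal and local commutativity applies. This yields a legal reordering~$(x_1,b_1,\ldots,b_{i-1},b_{i+1},\ldots,b_k)$ with the same odometer and final configuration, and~$\widetilde\beta=(b_1,\ldots,b_{i-1},b_{i+1},\ldots,b_k)$ is then a legal stabilizing sequence for~$\Phi_{x_1}(\eta,0)$ with~$m_{\widetilde\beta}=m_\beta-\delta_{x_1}$. Your proof is therefore essentially correct, but the ``parallel induction'' detour is unnecessary and, as written, not actually an argument --- you should replace it with the monotonicity observation above, which is what makes the simple bubble-sort go through.
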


We also have the following monotonicity property, which shows that
acceptable topplings may be used when looking for upper
bounds on the legal odometer:

\begin{lemma}[Lemma~2.1 in~\cite{Rolla20}]
\label{lemma_monotonicity}
If~$\alpha$ is an acceptable sequence of topplings that
stabilizes~$\eta$ in~$V$, and~$\beta\subset V$ is a legal sequence of
topplings for~$\eta$, then~$m_\alpha\geq m_\beta$.
Thus, if~$\alpha$ is an acceptable sequence of topplings that
stabilizes~$\eta$ in~$V$, then~$m_\alpha\geq m_{V,\,\eta}^\tau$.
\end{lemma}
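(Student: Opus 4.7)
The plan is to proceed by induction on the length~$|\beta|$. The base case~$|\beta|=0$ is immediate since~$m_\beta$ is then the zero function. For the inductive step, write~$\beta=(y)\cdot\beta'$; since~$\beta$ is legal for~$\eta$, the configuration~$\eta$ has an active particle at~$y$. Let~$\eta'=\tau_{y,1}\eta$ be the configuration after toppling~$y$ once in~$\eta$, so that~$\beta'$ is legal for~$\eta'$ (with odometer shifted by~$\delta_y$) and~$m_\beta=\delta_y+m_{\beta'}$.

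The core reduction is the claim that~$\alpha$ can be replaced by an acceptable stabilizing sequence for~$\eta$ in~$V$ of the form~$(y)\cdot\alpha'$ with the same odometer~$m_\alpha$; in particular~$\alpha'$ is then acceptable and stabilizes~$\eta'$ in~$V$. Granting this, the induction hypothesis applied to~$(\alpha',\beta')$ yields~$m_{\alpha'}\geqslant m_{\beta'}$, and adding~$\delta_y$ to both sides gives the desired inequality~$m_\alpha\geqslant m_\beta$. The second statement of the lemma then follows by taking the supremum over all legal sequences~$\beta\subset V$, using the definition of~$m_{V,\eta}$ in~\eqref{def_mVeta}.

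To establish the reduction, one first observes that~$\alpha$ must topple~$y$ at least once: otherwise the active particle initially at~$y$ could not be absorbed, contradicting the fact that~$\alpha$ produces a configuration that is stable in~$V\ni y$. One then locates the first occurrence of~$y$ in~$\alpha$ and bubbles it to the front by successively exchanging it with each preceding toppling. Each such exchange is a commutation of two topplings at distinct sites, and the key point is that these commute as operators on pairs configuration/odometer, while preserving the acceptability of every toppling that follows. The main obstacle is precisely verifying this commutation: it requires a short case analysis according to whether each of the two instructions is a sleep or a jump, and whether their local supports intersect, to check that after the swap the remaining sequence is still acceptable and produces the same final configuration and odometer. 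This is the analogue for acceptable topplings of the classical least-action principle in the sandpile literature, and is the technical content of Lemma~2.1 in~\cite{Rolla20}.
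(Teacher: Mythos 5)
The paper does not include a proof of this lemma; it is quoted verbatim as Lemma~2.1 of Rolla's survey~\cite{Rolla20}, so there is no in-paper argument to compare against. Your strategy (induction on~$|\beta|$ plus a ``bubbling'' reduction) is a correct and standard route to this least-action principle, and the observation that~$\alpha$ must topple~$y$ at least once is sound: since topplings at sites other than~$y$ can only increase the particle count at~$y$ and can never put the particle there to sleep, an initially active particle at~$y$ would persist if~$y$ were never toppled, contradicting stability of the final configuration in~$V\ni y$.

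However, you have outsourced the entire technical content of the lemma to the unverified assertion that two acceptable topplings at distinct sites commute and preserve acceptability of the tail; you flag this as the ``main obstacle'' but do not actually carry out the check, so as written this is a claim rather than a proof. The way to close the gap: (i)~note that the instruction used at each of the two sites is the same in either order, because the local odometer at each of the two sites is unchanged by the swap; (ii)~note that neither toppling can decrease the particle count at the other site, so the swapped pair is still acceptable; and (iii)~run the finite case check over instruction types (sleep/sleep, sleep/jump, jump/jump, with or without the jump landing on the other site, and with the source sites active or sleeping) to confirm the resulting configuration agrees --- after which acceptability of the remaining tail is automatic, since the state and odometer after the swapped pair are identical to the unswapped ones. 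One further small bookkeeping point worth making explicit: the induction hypothesis must be formulated for an arbitrary starting odometer (equivalently, for an arbitrary fixed instruction array), since after the first toppling of~$y$ the odometer is~$\delta_y$ rather than~$0$, so that~$\alpha'$ and~$\beta'$ act on~$\eta'$ with a shifted array.
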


\subsection{Probability of fixation}

We now make~$\eta$ and~$\tau$ become random.
Let~$\lambda>0$, let~$P:\Z^d\times\Z^d\to[0,1]$ be a jump
distribution and let~$\nu$ be a probability distribution on the
set~\smash{$(\N\cup\{\s\})^{\Z^d}$} of all possible initial
configurations.
We then write~$\PP^\nu_{\lambda,P}$ for the measure
relative to the activated random walk model with sleep rate~$\lambda$,
jump distribution~$P$ and initial configuration
distributed according to~$\nu$.
Then, the probability of fixation of the model is related to the
stabilization odometer through (see~\cite{RS12})
\begin{equation}
\label{fixation_odometer}
\PP^\nu_{\lambda,P}\big(\text{the system fixates}\big)
\ =\ 
\nu\otimes\mathcal{P}_{\lambda,P}\big(m_\eta^\tau(0)<\infty\big)
\,,
\end{equation}
where~$\mathcal{P}_{\lambda,P}$ is the measure on all the possible
stacks of instructions~$(\tau_{x,i})_{x\in\Z^d,\,i\geq
1}$ such that the instructions are independent and for
every~$x\in\Z^d$ and~$i\geq 1$, the instruction~$\tau_{x,i}$ is a
sleep instruction with probability~\smash{$\lambda/(1+\lambda)$} and it is a
jump instruction to~$y\in\Z^d$ with
probability~$P(x,\,y)/(1+\lambda)$.
Thus, to know whether the system fixates or not, it is enough to look
at this array of instructions~$\tau$ and to determine
whether the stabilization odometer at the origin,~$m_\eta^\tau(0)$,
is finite or not.

\section{Proof of Theorem~\ref{thm_no_exit}: the probability that no
particle exits}
\label{section_no_exit}

This section is devoted to the proof of Theorem~\ref{thm_no_exit},
following the strategy described in Section~\ref{section_sketches}.

\subsection{The block configuration}
\label{section_block_config}

We now construct the random block configuration.
Let~$n\geq 1$,~\smash{$\eta:V_n\to\N$} and let~$\nu$ be a
probability distribution on~$\N^\Z$.
For every~$k\in\Z$ we define~$B_k=V_n+kn$, called the block
number~$k$.
Let~$\bar\eta$ be the configuration which contains a copy
of~$\eta$ inside each of these blocks, i.e., for every~$k\in\Z$ and
every~$x\in B_k$ we have~$\bar\eta(x)=\eta(x-kn)$.
Then, we consider the
application~\smash{$\varphi_\eta:\N^\Z\to\N^\Z$} defined by
\[
\forall \xi\in\N^\Z\quad
\forall k\in\Z\quad
\forall x\in B_k\qquad
\varphi_\eta(\xi)(x)
\ =\ 
\xi(k)\times \bar\eta(x)
\,.
\]
With this notation, we
define~$\nu_{\eta}=(\varphi_\eta)_\star\nu$, the
push-forward measure of~$\nu$ through this map~$\varphi_\eta$.
For example if under~$\nu$, the variables~$(\xi(k))_{k\in\Z}$ are
i.i.d.\ Bernoulli with a certain parameter~$q$, then~$\nu_\eta$ is
the measure on the initial configurations such that each of the
blocks~$B_k$ for~$k\in\Z$ contains a translated copy
of~$\eta$ with probability~$q$, independently for each block.

\subsection{The coarse-graining coupling}

We now formalize the coupling that will allow us to study the model
started with this block configuration.
This coupling is illustrated in Figure~\ref{fig_coupling}.
 
\begin{proposition}
\label{prop_coupling}
Let~$\lambda>0$, let~$P:\Z\times\Z\to[0,1]$ be a translation-invariant
nearest-neighbour jump distribution,
let~$n\geq 1$, let~\smash{$\eta:V_n\to\N$} with~$\eta\neq 0$ and
let~$p=\PP_\eta(M_n=0)$.
Let~$\lambda'=p/(1-p)$ and let~$D:\Z\times\Z\to[0,1]$ be a
nearest-neighbour jump
distribution with directed jumps towards the origin, that is to say, such that for every~$x\geq 1$, we have~$D(x,\,x-1)=1$
and~$D(-x,\,-x+1)=1$.

Then, for every probability distribution~$\nu$ on~$\N^\Z$, there exists a coupling
between~$\nu_\eta\otimes\mathcal{P}_{\lambda,P}$ (called the block
model)
and~$\nu\otimes\mathcal{P}_{\lambda'\!,D}$ (called the coarse-grained
model) such that, if~$(\xi,\,\tau)$
and~$(\xi',\,\tau')$ are coupled with these two respective
distributions then we have the implication
\begin{equation}
\label{implication_coupling}
\big\{m_{\xi'}^{\tau'}(0)=0\big\}
\ \subset\ 
\big\{\forall x\in B_0\,,\ m_\xi^\tau(x)=0\big\}
\,,
\end{equation}
that is to say, if the origin is never visited in the coarse-grained
model then the block of the origin is never visited in the block
model.
\end{proposition}

\begin{proof}
Let~$\lambda,\,P,\,n,\,\eta,\,p,\,\lambda',\,D$ and~$\nu$ be as in the
statement.
Let~$\xi'$ be a random initial configuration with
distribution~$\nu$, and let~$\xi=\varphi_\eta(\xi')$ be the block
configuration obtained from~$\xi'$, as defined in
Section~\ref{section_block_config}, so that~$\xi$ has
distribution~$\nu_{\eta}$.
Let~$\tau$ be an array of instructions with
distribution~$\mathcal{P}_{\lambda,P}$, independent of~$\xi'$ (and
hence also independent of~$\xi$).

There now remains to define the array~$\tau'$.
The idea is to stabilize the configuration~$\xi$ using the
instructions in~$\tau$, one block after
another, writing instructions in~$\tau'$ along the procedure, depending on
whether we get good or bad stabilizations.
If the block of the origin is visited by a particle then the procedure
stops, because we only care about the event that the block of the
origin is never visited.

Formally, we construct recursively, for each~$j\in\N$, an array of
instructions~$\tau'_j$ and two sequences of topplings~$\alpha_j$
and~$\alpha'_j$ such that, for every~$j\in\N$, if we denote
respectively by~$\beta_j$ the concatenation of the
sequences~$\alpha_0,\,\ldots,\,\alpha_j$ and by~$\beta'_j$ the
concatenation of the sequences~$\alpha'_0,\,\ldots,\,\alpha'_j$, the
following properties are satisfied for every~$j\in\N$
(with~\eqref{priority} and~\eqref{modifs} concerning only~$j\geq 1$):

\begin{enumerate}[(i)]
\item
\label{acceptable}
The sequence~$\beta_j$ is~$\tau$-acceptable for~$(\xi,\,0)$,
allowing us to define~$(\xi_j,\,h_j)=\Phi^\tau_{\beta_j}(\xi,\,0)$.

\item
\label{legal}
The sequence~$\beta'_j$ is~$\tau'_j$-legal for~$(\xi',\,0)$, allowing
us to
define~\smash{$(\xi'_j,\,h'_j)=\Phi^{\tau'_j}_{\beta'_j}(\xi',\,0)$}.

\item
\label{coherent}
The value
of the coarse-grained configuration~$\xi'_j$ at every site~$k\in\Z$ is related
to the
configuration~$\xi_j$ inside the block~$B_k$ through:
\[
\left\{
\begin{aligned}
&\xi'_j(k)=0
\quad\Longrightarrow\quad
\xi_j\text{ empty in~$B_k$,}
\\
&\xi'_j(k)=\s
\quad\Longrightarrow\quad
\xi_j\text{ stable in~$B_k$, with~$\|\eta\|$ sleeping particles
in~$B_k$,}
\\
&\xi'_j(k)=\ell\in\N\setminus\{0\}
\quad\Longrightarrow\quad
\xi_j\text{ contains~$\ell\times\|\eta\|$ particles in~$B_k$, including a copy of~$\eta$,}
\end{aligned}
\right.
\]
where in the last case we mean that~$\sum_{x\in
B_k}|\xi_j(x)|\geq\ell\,\|\eta\|$ and~$\xi_j\geq\bar\eta$ in~$B_k$,
where~$\bar\eta$ is the configuration
defined in Section~\ref{section_block_config}, so that in the
case~$\ell=1$ this reduces to~$\xi_j=\bar\eta$ in~$B_k$.

\item
\label{sleeps_gathered}
Active particles remain farther away from the
origin than sleeping particles, in the following sense: for
every~$k,\,\ell\in\N$,
\[
\Big[
\big(\xi'_j(k)= 1
\ \text{ and }\ 
\xi'_j(\ell)=\s\big)
\quad\text{or}\quad
\big(\xi'_j(-k)= 1
\ \text{ and }\ 
\xi'_j(-\ell)=\s\big)
\Big]
\quad\Longrightarrow\quad
\ell\leq k
\,.
\]

\item
\label{priority}
We always topple in priority a block as close as possible to the
origin, that is to say, for every~$k\geq 1$,
if the set~$\{-k,\,\ldots,\,k\}$
is unstable in~$\xi'_{j-1}$ and~$\xi'_{j-1}(0)=0$, then~$\alpha'_j$
topples at least one site in this set.

\item
\label{modifs}
The instructions in the arrays~$\tau'_j$ are modified only when used
by~$\alpha'_j$,
that is to say, for every~$x\in\Z$ and every~$i\geq 1$,
if~$(\tau'_j)_{x,i}\neq(\tau'_{j-1})_{x,i}$ then~$h'_{j-1}(x)<i\leq
h'_j(x)$.

\item
\label{clear_origin}
As long as the origin remains empty
in the coarse-grained configuration~$\xi'_j$, the block of the
origin~$B_0$ is not toppled by the sequence~$\beta_j$, that is to
say,
\[
\xi'_j(0)=0
\quad\Longrightarrow\quad
\big(
\forall x\in B_0\quad
h_j(x)=0
\big)
\,.
\]

\item
\label{indep_tau}
The array~$\tau'_j$ has distribution~$\mathcal{P}_{\lambda'\!,D}$
and is independent of~$\xi'$.
\end{enumerate}

For~$j=0$, we simply take~$\tau'_0$ to be an array of
instructions with distribution~$\mathcal{P}_{\lambda'\!,D}$, independent
of~$(\xi,\,\xi',\,\tau)$, and we let~$\alpha_0$ and~$\alpha'_0$ be
empty toppling sequences, so that~$\xi_0=\xi$ and~$\xi'_0=\xi'$.
Thus, the items~\eqref{coherent} and~\eqref{sleeps_gathered} hold by definition
of~$\xi=\varphi_\eta(\xi')$ and because
there is no sleeping particle
in~$\xi'$, and the other items hold
trivially.

Let~$j\in\N$, assume that the array~$\tau'_j$ and the
sequences~$\alpha_0,\,\ldots,\,\alpha_j$
and~$\alpha'_0,\,\ldots,\,\alpha'_j$ are
already constructed and satisfy the eight above properties, and let us
construct~$\tau'_{j+1},\,\alpha_{j+1}$ and~$\alpha'_{j+1}$ which
satisfy the properties at rank~$j+1$.

If the configuration~$\xi'_j$ is stable or such that~$\xi'_j(0)\neq
0$ then we
simply take~$\alpha_{j+1}$ and~$\alpha'_{j+1}$ to be empty toppling
sequences, and we let~$\tau'_{j+1}=\tau'_j$, and the
conditions~\eqref{acceptable} to~\eqref{clear_origin}
are inherited at rank~$j+1$.
We will deal with condition~\eqref{indep_tau}
afterwards, once we defined~$\tau'_{j+1}$ in the different cases.

Assume now that~$\xi'_j$ is unstable and~$\xi'_j(0)=0$.
Let~$k\in\Z$ be such that~$\xi'_j(k)\geq 1$, with~$|k|$ being
minimal among such sites (with an arbitrary deterministic rule to
break the tie if any).
Since~$\xi'_j(0)=0$, we have~$k\neq 0$.
Let us define~$\ell=k-1$ if~$k>0$ and~$\ell=k+1$ if~$k<0$, so
that~$D(k,\,\ell)=1$.

We now distinguish between two cases.
First, let us assume that~$\xi'_j(k)=1$.
Then, item~\eqref{coherent} tells us that in the block~$B_k$ the
configuration~$\xi_j$ is a copy of~$\eta$.
We consider~$\gamma$ a~$\tau$-legal
toppling sequence
for~$(\xi_j,\,h_j)$ which topples the sites of~$B_k$ (and only these
sites) until all sites of~$B_k$
are stable (with probability~$1$ such a sequence exists, let us assume
that this is the case).

By definition of~$p$ as the
probability of a good stabilization, with probability~$p$ no particles
jump out of~$B_k$ while performing this sequence of topplings~$\gamma$.
In this case, we let~$\tau'_{j+1}$ be the
array obtained from~$\tau'_j$ by replacing the instruction
number~$h'_j(k)+1$ at site~$k$ with a sleep instruction, and we
define~$\alpha_{j+1}=\gamma$ and~$\alpha'_{j+1}=(k)$, so that
by construction items~\eqref{acceptable} and~\eqref{legal} hold for~$j+1$.
We then have~$\xi'_{j+1}(k)=\s$ and~$\xi_{j+1}$ is
stable in the block~$B_k$, with nothing changed out of this block
compared to~$\xi_j$,
which shows that item~\eqref{coherent} still holds
at rank~$j+1$.
Besides, the minimality of~$|k|$ ensures that
items~\eqref{sleeps_gathered} and~\eqref{priority}
remain satisfied at rank~$j+1$.
Property~\eqref{modifs} holds for~$j+1$ because~$\alpha'_{j+1}$ uses
precisely the instruction that we replaced.
Lastly, item~\eqref{clear_origin} is inherited
because we performed topplings only in the block~$B_k$, with~$k\neq
0$.

Assume now that a bad stabilization happens: a particle jumps out
of~$B_k$ while applying~$\gamma$.
We then let~$\tau'_{j+1}$ be the
array obtained from~$\tau'_j$ by replacing the instruction
number~$h'_j(k)+1$ at site~$k$ with a jump instruction towards the origin.
Then, we force
these~$\|\eta\|$ particles which were in the block~$B_k$ in the
configuration~$\xi_j$ to walk, with topplings which
are~$\tau$-acceptable for~$\Phi_{\gamma}^\tau(\xi_j,\,h_j)$, until these
particles form a copy of~$\eta$ in the block~$B_\ell$ (recall
that~$B_\ell$
is the neighbour block of~$B_k$ in the direction of the origin).
We do this by performing only topplings on the sites of the
blocks~$B_m$ with~$m\geq\ell$ if~$k>0$ or on the sites of the blocks~$B_m$
with~$m\leq\ell$ if~$k<0$ (to see this, label the particles, assign
to each particle a destination site in~$B_\ell$ and move each particle
one by one until it reaches its destination, which eventually happens
with probability~$1$, and doing so no particle walks closer to the
origin than its destination, because the jumps are only to the nearest
neighbours).
We call~$\delta$ the obtained acceptable sequence, and
we then let~$\alpha_{j+1}=(\gamma,\,\delta)$
and~$\alpha'_{j+1}=(k)$.
Here also, items~\eqref{acceptable} and~\eqref{legal} hold by
construction.
Item~\eqref{sleeps_gathered}, together with item~\eqref{coherent},
ensures that none of these blocks on which we perform topplings
can contain sleeping particles, except maybe the block~$B_\ell$.
This entails that items~\eqref{coherent} and~\eqref{sleeps_gathered} still
hold at rank~$j+1$.
As in the previous case, property~\eqref{priority} holds for~$j+1$ by
minimality of~$|k|$ and property~\eqref{modifs} because~$\alpha'_{j+1}$
uses the replaced instruction.
The property~\eqref{clear_origin} is also inherited because
if~$\ell\neq 0$ then we did not topple the block~$B_0$, whereas
if~$\ell=0$ then~$\xi'_{j+1}(0)\neq 0$.

We now deal with the case~$\xi'_j(k)\geq 2$.
Then, item~\eqref{coherent} tells us
that~$\xi_j\geq\bar\eta$ in~$B_k$ and that the
configuration~$\xi_j-\bar\eta$ contains at least~$\|\eta\|$
particles.
We move~$\|\eta\|$ of these particles with topplings read from~$\tau$
and acceptable for~$(\xi_j,\,h_j)$ until these particles form a copy
of~$\eta$ in the block~$B_\ell$, performing topplings only on the
sites of the blocks~$B_m$ with~$m\geq\ell$ if~$k>0$ or~$m\leq\ell$
if~$k<0$.
It is important that we move these extra particles, leaving
aside~$\|\eta\|$ other particles which already form a copy of~$\eta$ in the
block~$B_k$, so that the configuration will still contain a copy
of~$\eta$ in this block~$B_k$ at rank~$j+1$.
We then let~$\alpha_{j+1}$ be the acceptable sequence of topplings
performed.
Here we do not need to modify the array~$\tau'_j$ because,
since~$\xi'_j(k)\geq 2$, we can move one particle from~$k$ to~$\ell$
with legal topplings read from~$\tau'_j$, by simply reading
instructions until we find a jump instruction (and this jump
instruction necessarily points to~$\ell$, by definition of the
directed jump distribution~$D$).
Hence, we let~$\tau'_{j+1}=\tau'_j$ and we let~$\alpha'_{j+1}$ be the
sequence consisting of~$m_0$ occurrences
of~$k$, where~\smash{$m_0=\inf\{m\geq
1\,:\,(\tau'_j)_{k,\,h'_j(k)+m}\neq\s\}$} (which is almost surely
finite, let us assume that it is the case).
As before, items~\eqref{acceptable} to~\eqref{clear_origin} are
inherited at rank~$j+1$.

We constructed~$\alpha_{j+1}$,~$\alpha'_{j+1}$
and~$\tau'_{j+1}$ in the different cases, and there now remains to
check that item~\eqref{indep_tau} remains true at rank~$j+1$.
To show this, consider
the~$\sigma$-field
\[
\mathcal{F}_j
\ =\ 
\sigma\big(\xi',\,
h_j,\,((\tau_j)_{x,i})_{x\in\Z,\,i\leq h_j(x)},\,
h'_j,\,((\tau'_j)_{x,i})_{x\in\Z,\,i\leq h'_j(x)}\big)
\,,
\]
which contains
the information of the initial configuration and of the instructions
used by~$\beta_j$ and~$\beta'_j$.
Then,~$\xi_j$ and~$\xi'_j$ are~$\mathcal{F}_j$-measurable, so the
event~$A$ that~$\xi'_j$ is unstable, with~$\xi'_j(0)=0$ and that the
site~$k$ that we consider in the construction at step~$j$ is such
that~$\xi'_j(k)=1$, is~$\mathcal{F}_j$-measurable.
Recall that we replace an instruction in~$\tau'_{j+1}$ compared
to~$\tau'_j$ only on this event~$A$, and that the
instruction that we write is a sleep instruction if and only if we get
a good stabilization.
Yet, conditioned on~$\mathcal{F}_j$, on this event~$A$
the conditional probability to obtain a good stabilization is constant
and equal to~$p$,
because it depends only on instructions of~$\tau$ that were not used
by~$\beta_j$, and thus are not revealed yet.
Hence, when we rewrite an instruction, then the instruction
that we write is independent of~$\mathcal{F}_j$, as was
the instruction that was present at this position in~$\tau'_j$, and
moreover these two instructions have the same
distribution, since~$p=\lambda'/(1+\lambda')$ (the instruction that we
write is a sleep instruction with probability~$p$, while~$\lambda'/(1+\lambda')$ is the probability that a given instruction in~$\tau'_j$ is a sleep instruction).
Hence,~$\tau'_j$ and~$\tau'_{j+1}$ have the same conditional
distribution knowing~$\mathcal{F}_j$.
A fortiori, since~$\sigma(\xi')\subset\mathcal{F}_j$, we deduce
that~$\tau'_j$ and~$\tau'_{j+1}$ also have the same conditional
distribution knowing only~$\xi'$.
Yet, item~\eqref{indep_tau} at rank~$j$ ensures that~$\tau'_j$ is independent of~$\xi'$
and has distribution~$\mathcal{P}_{\lambda'\!,D}$: therefore, it
remains true at rank~$j+1$.

Thus, we constructed~$\tau'_j,\,\alpha_j$ and~$\alpha'_j$ for
every~$j\in\N$, which satisfy the eight above properties.
Note that property~\eqref{modifs} entails that for every fixed~$k\in\Z$
and~$i\geq 1$, in the sequence~$(\tau'_j)_{j\in\N}$ the instruction at
position~$(k,i)$ changes at most once, so that the
sequence~\smash{$((\tau'_j)_{k,i})_{j\in\N}$} is stationary.
Hence, the sequence of arrays~$(\tau'_j)_{j\in\N}$ weakly converges to
a limit that we denote~$\tau'$.
The property~\eqref{indep_tau} passes to the limit: this
array~$\tau'$ is also independent
of~$\xi'$ and has distribution~$\mathcal{P}_{\lambda'\!,D}$.

There now remains to show that the
implication~\eqref{implication_coupling} holds.
From now on and until the end of the proof, we assume
that~\smash{$m_{\xi'}^{\tau'}(0)=0$}.
Since by definition~$\xi'(0)\neq\s$, this implies
that~$\xi'(0)=0$, and that the site~$0$ remains empty
after applying any~$\tau'$-legal sequence of topplings.
Note that item~\eqref{modifs} ensures that for every~$j\in\N$ the
instructions of~$\tau'_j$ that are
used by~$\beta'_j$ are not modified in subsequent steps, and are
therefore identical in~$\tau'$.
Therefore, the
sequence~$\beta'_j$ is not only~$\tau'_j$-legal but also~$\tau'$-legal
for~$(\xi',\,0)$.
By what we just said, this implies that~$\xi'_j(0)=0$ for every~$j\in\N$.

This implies that for every~$k\in\Z\setminus\{0\}$, the
sequence~$\beta'_j$ performs at most~$|k|-1$ jumps at the site~$k$.
Indeed, if~$\beta'_j$ performs at least~$|k|$ jumps at
a site~$k\in\Z\setminus\{0\}$ then the resulting
configuration~$\xi'_j$
contains at least~$|k|$ particles strictly between the origin and~$k$
and then one of these particles will eventually reach the origin, due
to the priority rule of item~\eqref{priority},
which contradicts our finding that~$\xi'_j(0)=0$ for every~$j\in\N$.

Let now~$k\geq 1$,~$V'=\{-k,\,\ldots,\,k\}$ and~$V=\cup_{\ell\in
V'}B_\ell$ and let us show
that for every~$x\in B_0$,~$m_{\xi,V}^\tau(x)=0$.
For every~$j\in\N$, let~$u_j$ be the total number of jumps performed by the
sequence~$\alpha'_j$ on the sites of~$V'$.
The above paragraph ensures that this (non-decreasing) sequence~$(u_j)_{j\in\N}$ is
bounded, and therefore constant from a certain
rank~$j_0$, which means that after step~$j_0$ we do not perform
any more jumps on~$V'$.
Yet, for every~$j\geq j_0$ such that~$\xi'_j$ is still
unstable in~$V'$, the priority rule of item~\eqref{priority} forces us
to topple a site of~$V'$, but this toppling can only be a sleep
because~$j\geq j_0$, so that one particle of~$V'$ falls asleep.
Hence, at the latest at step~$j=j_0+2k$, the configuration~$\xi'_j$ must be
stable in~$V'$.

By virtue of the connection given by item~\eqref{coherent} between~$\xi_j$
and~$\xi'_j$, this implies that the
configuration~$\xi_j$ is stable in~$V$.
Thus,~$\beta_j$ is a~$\tau$-acceptable sequence which
stabilizes~$\xi$ in~$V$.
By Lemma~\ref{lemma_monotonicity}, this implies
that~$m_{V,\xi}^\tau\leq m_{\beta_j}=h_j$.

Yet, since~$\xi'_j(0)=0$ the property~\eqref{clear_origin}
entails that~$h_j(x)=0$ for every~$x\in B_0$, whence~$m_{V,\xi}^\tau(x)=0$ for
every~$x\in B_0$.
Since this holds for every~$k\geq 1$, we deduce
that for every~$x\in B_0$ we have~$m_\xi^\tau(x)=0$, which concludes
the proof of the implication~\eqref{implication_coupling}.
\end{proof}

\subsection{Positive probability that the block of the origin is never
visited}

Using the coupling given by Proposition~\ref{prop_coupling} and
combining it with Theorem~\ref{lemmaRS}, we obtain:

\begin{corollary}
\label{cor}
Let~$\lambda>0$, let~$P$ be a translation-invariant nearest-neighbour
jump distribution,
let~$n\geq 1$, let~$\eta:V_n\to\N$ with~$\eta\neq 0$ and
let~$p=\PP_\eta(M_n=0)$.
Then for every i.i.d.\ initial distribution~$\nu$ with mean density of
particles~$q<p$ we have
\[
\nu_\eta\otimes\mathcal{P}_{\lambda,P}
\big(\forall x\in B_0\,,\ m_\xi^\tau(x)=0\big)
\ >\ 0
\,.
\]
\end{corollary}

\begin{proof}
Proposition~\ref{prop_coupling} shows that
\begin{equation}
\label{res_coupling}
\nu_\eta\otimes\mathcal{P}_{\lambda,P}
\big(\forall x\in B_0\,,\ m_\xi^\tau(x)=0\big)
\ \geq\ 
\nu\otimes\mathcal{P}_{\lambda'\!,D}
\big(
m_{\xi}^{\tau}(0)=0\big)
\,,
\end{equation}
with~$\lambda'=p/(1-p)$ and~$D$ a directed jump distribution, as defined
in the statement of Proposition~\ref{prop_coupling}.
Yet, since~$q<p=\lambda'/(1+\lambda')$, Theorem~\ref{lemmaRS} entails
that the right-hand side of~\eqref{res_coupling}
is strictly positive, whence the result.
\end{proof}

\subsection{Making the initial configuration translation-invariant}
\label{section_offset}

Now, we would like to deduce that for~$\nu$ and~$\nu_\eta$ as above,
the mean density of particles
in this initial distribution~$\nu_\eta$ is at most~$\zeta_c$,
using Theorem~\ref{thm_phase_transition}.
But there remains to deal with a small issue: this
distribution is not translation-invariant.
To obtain an initial distribution that is invariant by
translation, we simply apply a translation by a random offset.
Thus, we take~$\xi$ distributed according to~$\nu_\eta$ and~$Y$ a
uniform variable in~$B_0=V_n$, independent of~$\xi$, and we
define the configuration~$\xi^\text{inv}$ by writing, for
every~$x\in\Z$,
$$\xi^\text{inv}(x)
\ =\ 
\xi(x+Y)
\,.
$$
We call~$\nu_\eta^\text{inv}$ the distribution of~$\xi^\text{inv}$, and we
now check that this distribution is translation-ergodic.
First, by construction it is translation-invariant.
There remains to see that every translation-invariant event has
probability~$0$ or~$1$.
Let~$\xi'$ be a
random configuration with distribution~$\nu$ and
let~\smash{$\xi=\varphi_\eta(\xi')$}, so that~$\xi$ has
distribution~$\nu_{\eta}$, and let~$\xi^\text{inv}$ be defined as
above, with~$Y$ independent of~$\xi$. 
Then, if~$A$ is a Borel set of~$(\N\cup\{\s\})^\Z$ which is
invariant by translation, we can write
\begin{align*}
\nu_\eta^\text{inv}(A)
&\ =\ 
\PP(\xi^\text{inv}\in A)
\ =\ 
\sum_{y\in B_0}
\PP\big(\{Y=y\}\cap\{(x\mapsto\xi(x+y))\in A\}\big)
\ =\ 
\sum_{y\in B_0}
\PP(Y=y)\,\PP(\xi\in A)
\\
&\ =\ 
\PP(\xi\in A)
\ =\ 
\PP\big(\xi'\in(\varphi_\eta)^{-1}(A)\big)
\ =\ 
\nu\big((\varphi_\eta)^{-1}(A)\big)
\end{align*}
where, in the third equality we used the independence of~$Y$
and~$\xi$ and the fact that~$A$ is invariant by translation.
Yet, the distribution~$\nu$ is translation-ergodic because it is
i.i.d., and the event~$(\varphi_\eta)^{-1}(A)$ is invariant by
translation because~$A$ itself is invariant by translation, so we
have~$\nu_q\big((\varphi_\eta)^{-1}(A)\big)\in\{0,1\}$, which allows
us to deduce that~$\nu_\eta^\text{inv}(A)\in\{0,1\}$.
Hence, the distribution~$\nu_\eta^\text{inv}$ is translation-ergodic.

Lastly, note that the density of particles in this random initial
configuration is given by
\begin{equation}
\label{density_eta0}
\E\big[\xi^\text{inv}(0)\big]
\ =\ 
\E\big[\xi(Y)\big]
\ =\ 
\E\big[\xi'(0)\eta(Y)\big]
\ =\ 
q\times
\E\big[\eta(Y)\big]
\ =\ 
q\times\frac{\|\eta\|}{n}
\,.
\end{equation}

\subsection{Conclusion}

We now put the pieces together to obtain Theorem~\ref{thm_no_exit}.

\begin{proof}[Proof of Theorem~\ref{thm_no_exit}]
Let~$\lambda,\,P,\,\zeta,\,n$ and~$\eta$ be as in the statement, let us
write~$p=\PP_\eta(M_n=0)$ and consider~$q\in[0,\,p)$.

Let~$\nu$ be an i.i.d.\ probability distribution on~$\N^\Z$ with mean
density~$q$.
With the initial distribution~$\nu_\eta^\text{inv}$ defined in
Section~\ref{section_offset}, the relation~\eqref{fixation_odometer}
between fixation and the odometer allows us to write
\begin{align*}
\smash{\PP_{\lambda,P}^{\nu_\eta^\text{inv}}}\big(\text{the system fixates}\big)
&\ =\ 
\nu_\eta^\text{inv}\otimes\mathcal{P}_{\lambda,P}
\big(m_{\xi}^\tau(0)<\infty\big)
\ \geq\ 
\nu_\eta^\text{inv}\otimes\mathcal{P}_{\lambda,P}
\big(m_{\xi}^\tau(0)=0\big)
\\
&\ =\ 
\frac 1 n\sum_{y\in B_0}
\nu_{\eta}\otimes\mathcal{P}_{\lambda,P}
\big(m_{\xi}^\tau(y)=0\big)
\ \geq\ 
\nu_{\eta}\otimes\mathcal{P}_{\lambda,P}
\big(\forall x\in B_0\,,\ m_\xi^\tau(x)=0\big)
\,,
\end{align*}
which is strictly positive by Corollary~\ref{cor}.
Thus, the model  with initial distribution~$\nu_\eta^\text{inv}$ fixates
with positive probability.

Since~$\nu_\eta^\text{inv}$ is translation-ergodic,
Theorem~\ref{thm_phase_transition} allows us to deduce that this
initial distribution is not supercritical, that is to
say,~$\nu_\eta^\text{inv}[\xi(0)\big]\leq\zeta_c$.
Given~\eqref{density_eta0} and recalling that~$\|\eta\|\geq\zeta n$,
we obtain that~\smash{$q\zeta\leq\zeta_c$}.
This being true for every~$q<p$, we eventually deduce
that~$p\zeta\leq\zeta_c$, which concludes the proof of
Theorem~\ref{thm_no_exit}.
\end{proof}

\section{Proof of Theorem~\ref{thm_explicit}: a fraction jumps out of the
segment}
\label{section_explicit}

The aim of this section is to deduce Theorem~\ref{thm_explicit} from
Theorem~\ref{thm_no_exit}.

\subsection{Preliminary: a no man's
land around a segment}

The following Lemma tells us that adding empty intervals around the
segment~$V_n$ where particles are not allowed to sleep and
stabilizing the configuration in~$V_n$ and in these intervals does not
increase the number of particles which exit during stabilization, at
least in distribution.

\begin{lemma}
\label{lemma_NML}
Let~$\lambda>0$ and let~$P$ be a nearest-neighbour jump distribution
on~$\Z$.
Let~$n\geq 1$, let~$\eta:V_n\to\N$ be a fixed deterministic
initial configuration on~$V_n$ with only active particles, and
let~$a,\,b\in\Z$ be such
that~\smash{$W=\{a,\,\ldots,\,b\}\supset V_n$}.
Starting from the initial configuration~$\eta$ and performing legal
topplings in~$V_n$ and acceptable topplings
in~$W\setminus V_n$ until the resulting configuration is stable
in~$V_n$ and empty in~$W\setminus V_n$, we denote by~$M_n^W$ the
number of particles which jump out of~$W$.
Then~$M_n$ stochastically dominates~$M_n^W$.
\end{lemma}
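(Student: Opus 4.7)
The plan is to couple both stabilisations through the site-wise construction (Section~\ref{site_wise}), sharing a single instruction array~$\tau$. The Abelian property for acceptable topplings (Lemma~\ref{lemma_monotonicity}) allows me to run the $W$-procedure in two phases. In phase~1 I perform only legal topplings in~$V_n$, until~$V_n$ is stable; this is precisely the $V_n$-procedure applied to~$\eta$, so it leaves~$V_n$ with exactly $N:=\|\eta\|-M_n$ sleeping particles and deposits the~$M_n$ exited particles on the two sites of~$W\setminus V_n$ immediately outside~$V_n$. Phase~2 then empties~$W\setminus V_n$ by acceptable topplings there, interleaved with legal topplings in~$V_n$ whenever a particle re-enters the segment. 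By particle conservation at the end of the whole $W$-procedure, $M_n^W = \|\eta\| - N^W$, where $N^W$ is the number of sleepers in~$V_n$ at termination, so the claim $M_n\geq M_n^W$ becomes a comparison between the numbers $N$ and $N^W$ of sleepers left in~$V_n$ by the two procedures.

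Under this common-$\tau$ coupling, however, the pathwise inequality $N^W\geq N$ does not hold in general: a cascade in~$V_n$ triggered by a single re-entering particle can, in one dimension, eject two particles at once when the stable configuration is nearly saturated, reducing the sleeper count. The stochastic dominance therefore has to be extracted from a distributional analysis of phase~2 rather than from a naive pathwise bound. Viewing phase~2 as a sub-stabilisation on~$W$, starting from the $M_n$ particles in~$W\setminus V_n$ and the stable configuration of~$V_n$ left by phase~1, I would prove by induction on the number of re-entry episodes that the resulting number of $W$-exits is stochastically dominated by~$M_n$. Each episode empties one site of $W\setminus V_n$ into either~$\partial W$ (contributing to $M_n^W$) or~$V_n$, and in the latter case triggers a legal re-stabilisation that may push some particles back into $W\setminus V_n$; the key quantitative input is that the expected number of exits from~$W$ per particle reaching $W\setminus V_n$ during phase~2 is at most one.

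The main obstacle is exactly this last bound: a single cascade can release more than one particle back into $W\setminus V_n$, so a per-episode ``one entry, at most one re-exit'' inequality fails. To close the argument, I would introduce an auxiliary matching that assigns each exit from~$W$ in phase~2 to a distinct phase~1 exit from~$V_n$, exploiting the fact that in~$\Z$ with nearest-neighbour jumps, $W\setminus V_n$ splits into two disjoint intervals $\{a,\dots,-n/2\}$ and $\{n/2+1,\dots,b\}$, so particles on opposite sides do not interact except through~$V_n$. On each side, the flux across the corresponding boundary of~$V_n$ is a monotone function of the odometer at that boundary site, and one can bound the number of re-exits into a given side by the number of previous entries from the same side; summing over sides gives the $1$-to-$1$ accounting needed to push the pathwise comparison through in distribution and yields the stochastic dominance of~$M_n$ over~$M_n^W$, as required to feed into the two-step argument sketched around Figure~\ref{fig_thm_explicit}.
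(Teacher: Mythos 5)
Your phase-1/phase-2 decomposition (first fully stabilize $V_n$ with legal topplings, then deal with the $M_n$ escapees and the resulting re-entries) is a natural first attempt, but it is not the paper's route, and it runs into exactly the difficulty you flag and then do not resolve. You correctly observe that the pathwise inequality $N^W\geqslant N$ can fail (a re-entering particle can trigger a cascade that ejects several particles), and that the dominance must therefore come from a distributional argument. But the ``auxiliary matching'' and the claim that ``one can bound the number of re-exits into a given side by the number of previous entries from the same side'' are precisely the step that is missing: a single re-entry can produce several new exits on the same side (that is what a cascade \emph{is}), and the flux-monotonicity-in-the-odometer remark, while true pointwise, does not by itself yield a one-to-one accounting between phase-1 exits and phase-2 exits. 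As written, the argument asserts the conclusion it needs rather than proving it.

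The paper sidesteps this entirely by never completing a ``phase 1.'' After first reducing (by applying the claim twice) to the case where only the left side of $W$ is enlarged to $W'$, it prescribes a specific order of topplings: at every step, topple the leftmost site of $W$ that is eligible (either the leftmost non-empty site of $W\setminus V_n$, or the leftmost site of $V_n$ carrying an active particle). This invariant guarantees that sleeping particles are always to the left of all active ones, so the instant a particle exits $W$ on the left, \emph{every} remaining particle in $W$ is awake. One can then pause and force that particle, by acceptable topplings, all the way out of $W'$ without waking anything; the interior dynamics are unchanged, so the number of $W'$-exits under this strategy is distributed as $M_n^W$. Finally, because the exit count is an increasing function of the odometer and this acceptable sequence stabilizes $V_n$ in $W'$, Lemma~\ref{lemma_monotonicity} gives that this count dominates $M_n^{W'}$ pathwise. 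The cascade problem you worried about never arises because the coupling is built so that every $W$-exit is immediately escorted out of $W'$ at a moment when no sleepers can be disturbed. To repair your proof you would need to supply a genuinely new argument for the matching step; the cleaner fix is to switch to the paper's ``leftmost-first'' toppling order.
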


This result can seem counter-intuitive, since forcing
particles to not only exit~$V_n$ but also to cross these empty intervals
could lead some of them to come back inside~$V_n$ and wake up sleeping
particles, causing more of them to jump out.
Indeed, for a fixed realization of the array of toppling instructions,
it is not true in general that~$M_n^W\leq M_n$, but the lemma
indicates that this inequality turns out to hold in distribution.

\begin{proof}
Let~$\lambda,\,P,\,n,\,\eta,\,a,\,b,\,W$ be as in the statement.
Let us first consider the simpler case of a no man's land being added
only on one side of~$V_n$, say the left side.
That is to say, we assume that~$b=\max V_n$.

Let~$\tau$ be a random array of independent instructions, with no
sleep instructions in~$W\setminus V_n$, that is to say,~$\tau$ is
obtained from the usual array (as considered in the site-wise
construction presented in Section~\ref{site_wise}) by removing the
sleep instructions on the sites of~$W\setminus V_n$.
Then~$M_n^W$ is the number of particles which jump out of~$W$ during
the stabilization of~$W$ with topplings which are legal for~$\tau$.

We then consider the following toppling strategy, which is represented
in Figure~\ref{fig_W}:
\begin{itemize}
\item \textbf{Step 1:} Topple the leftmost active particle in~$V_n$.
\item \textbf{Step 2:} If a particle just left~$V_n$ by the left exit,
force it to
walk with acceptable topplings until it leaves~$W$.
\item Repeat these steps 1 and 2 as long as there
remains active
particles in~$V_n$.
\end{itemize}

\begin{figure}
\begin{center}
\begin{tikzpicture}[scale=0.2]
\draw[very thick,->,yellow!40!black] (-27,18) -- node[midway,left]{
\shortstack{
if a particle\\
exits~$V_n$\\
by the left
}
} (-27,8);
\draw[very thick,->,yellow!40!black] (-19,18) to[out=-90,in=180] (-17.5,14)
node[below]{
\shortstack{
otherwise if~$V_n$\\
is still unstable
}
} to[out=0,in=-90] (-16,17);
\draw[very thick,->,yellow!40!black] (5,18) to[bend right=20]
node[midway,above right]{otherwise} (15,13.5);
\draw (15,12.5) node[right] 
{End of the procedure};
\draw[very thick,->,magenta!50!black] (-7.5,7) -- node[midway,right]{
\shortstack{
if~$V_n$ is\\
still unstable
}} (-7.5,17);
\draw[very thick,->,magenta!50!black] (5,7) to[bend left=20] node[midway,below right]{otherwise}
(15,11.5);
\filldraw[thick,fill=gray!60!yellow!15,draw=yellow!40!black] (-30,18) rectangle (11,28);
\filldraw[thick,fill=gray!60!magenta!15,draw=magenta!50!black] (-30,-7) rectangle (11,7);
\draw[thick, dashed,<->,gray!70!black] (-9.5,-1) --
node[below]{$V_n$} (9.5,-1);
\draw[thick, dashed,<->,gray!70!black] (-9.5,21) --
node[below]{$V_n$} (9.5,21);
\draw[thick, dashed,<->,gray!70!black] (-28.5,-4) --
node[below]{$W$} (9.5,-4);
\draw[very thick] (-9.4,.3) -- (9.4,.3);
\draw[very thick] (-9.4,22.3) -- (9.4,22.3);
\draw[very thick,dotted,blue!50, line cap=round] (-9.6,.3)
--node[below]{no sleep instructions} (-28.4,.3);
\draw[very thick,dotted,blue!50, line cap=round] (-9.6,22.3)
-- (-28.4,22.3);
\foreach \x in
{(2,1),(5,1),(6,1),(6,2),(8,1),(-6,1),(-3,1),(-2,1),(6,23),(6,24),(8,23),(5,23),(3,23),(5,1)}
{
\fill \x circle(.4);
}
\fill[yellow!40!black] (1,23) circle(.4);
\draw[very thick,->,yellow!40!black] (1,25) -- (1,23.5);
\foreach \x in {-6,-5,-2}
{
\draw (\x,23) node{$\s$};
}
\fill[magenta!50!black] (-10,1) circle(.4);
\draw[very thick,->,magenta!50!black] (-10.4,1) --  (-18,1)
to[out=180,in=180] (-18,2)
--  (1,2) to[out=0,in=0] (1,3) -- (-28.5,3);
\draw[magenta!50!black] (-9.5,5) node{\textbf{Step 2:} force the particle to walk out
of~$W$};
\draw[yellow!40!black] (-9.5,26) node{\textbf{Step 1:} topple the leftmost active
particle in~$V_n$};
\end{tikzpicture}
\end{center}
\caption{
\label{fig_W}
Strategy to prove~Lemma~\ref{lemma_NML} in the case~$\max W=\max V_n$.
The key point is that we always topple the leftmost active particle
in~$V_n$, so that whenever a particle jumps out of~$V_n$ from the left,
all the other particles are active, allowing us to force this particle
to walk out of~$W$ with no effect on the other particles.
}
\end{figure}
Let~$N$ be the number of particles which jump out of~$W$ during this
procedure.
If the procedure terminates (which happens almost surely), it yields an acceptable sequence of topplings~$\alpha$ which
stabilizes~$W$.
Thus, by the monotonicity property given by
Lemma~\ref{lemma_monotonicity}, the odometer of this sequence
satisfies~$m_\alpha\geq m_{W,\,\eta}^\tau$.
Yet, for any given realization of the array of instructions, the
number of particles which jump out of~$W$ when applying a toppling
procedure is an increasing function of the odometer of this procedure.
Therefore, we have~$N\geq M_n^W$.

We now show that~$N$ has the same distribution as~$M_n$.

Note that, after each time that step~1
is performed, then all the sleeping particles are located on the left
of the leftmost active particle.
That is to say, for any two sites~$x,\,y\in V_n$ with~$x<y$, it cannot be
that~$x$ contains an active particle while~$y$ contains a sleeping
one.
Indeed, if this was not true, then consider the first instant
when two such sites~$x<y$ exist.
Then necessarily the last instruction used must have been a sleep
instruction at~$y$, which contradicts the rule that we always topple
the leftmost active site, since~$x$ was active.

As a consequence, each time that step~2 is
triggered then all the particles in~$V_n$ must be active.
Indeed, when a particle leaves~$V_n$ by the left exit during step~1,
we know that we just toppled the leftmost site of~$V_n$,
since we consider a nearest-neighbour jump distribution.
Following the above observation, this implies that all the particles
in~$V_n$ are active.

Thus, the configuration inside~$V_n$ is left unchanged after
performing step~2, the only change in~$V_n$ being that step~2 has used some
toppling instructions, but the remaining instructions remain i.i.d.\
with the same distribution.
Let~$\tau'$ be the field of instructions
obtained from~$\tau$ by removing the instructions used during all the
occurrences of step~2.
Denote by~$M_n(\tau')$ the number of particles which
jump out of~$V_n$ during the stabilization of~$V_n$ using the
instructions in~$\tau'$, ignoring particles once they jump out
of~$V_n$.
Then, we have~$N=M_n(\tau')$.
Yet, this field~$\tau'$ has the same distribution as~$\tau$,
whence the equality in
distribution~\smash{$M_n(\tau')\stackrel{d}{=}M_n(\tau)=M_n$}.
To sum up, we have~\smash{$M_n^W\leq
N=M_n(\tau')\stackrel{d}{=}M_n$}, which shows that~$M_n$
stochastically dominates~$M_n^W$, concluding the proof in the
case~$b=\max V_n$.

We now turn to the general case of~$W=\{a,\,\ldots,\,b\}\supset V_n$.
Writing~$U=\{a,\,\ldots,\,\max V_n\}$, the above proof shows
that~$M_n$ stochastically dominates~$M_n^U$.
Then, we repeat a similar strategy to show that~$M_n^U$
dominates~$M_n^W$.
More precisely, considering an array~$\tau$ with no sleeps out
of~$V_n$, we now adopt the following strategy:
\begin{itemize}
\item \textbf{Step 1:} Topple the rightmost active particle in~$U$.
\item \textbf{Step 2:} If a particle just left~$U$ by the right exit,
force it to walk with acceptable topplings until it leaves~$W$.
\item Repeat these steps 1 and 2 as long as there
remains active particles in~$U$.
\end{itemize}
Then, the same arguments as before show that the number of particles
which exit~$W$ with this procedure is at least~$M_n^W$ and is
distributed as~$M_n^U$, which concludes the proof.
\end{proof}

\subsection{If few particles jump out, then no one leaves a slightly larger
segment}

We now prove the following lower bound on the
cost to stabilize in the no man's lands all the particles which jump out
of~$V_n$:

\begin{lemma}
\label{lemma_spread}
In dimension~$d=1$, for every~$\lambda>0$ and every nearest-neighbour jump
distribution~$P$, for every~$n\geq 1$ and every deterministic initial
configuration~$\eta:V_n\to\N$, for any~$k,\,\ell\in\N$, we have
$$
\PP_{\eta}\big(M_{n+4\ell}=0\big)
\ \geq\ 
\PP_\eta\big(M_n\leq k\big)
\times
\PP\big(G_1+\cdots+G_k\leq\ell\big)
\,,
$$
where~$(G_j)_{j\geq 1}$ are i.i.d.\
Geometric variables with parameter~$\lambda/(1+\lambda)$.
 \end{lemma}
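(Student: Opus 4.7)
The plan is to implement the two-step strategy sketched in Figure~\ref{fig_thm_explicit}. In Step~1, I apply Lemma~\ref{lemma_NML} with $W=V_{n+2\ell}$: starting from $\eta$, I perform legal topplings in $V_n$ and acceptable topplings in $V_{n+2\ell}\setminus V_n$ until $V_n$ is stable and the inner buffer $V_{n+2\ell}\setminus V_n$ is empty. Let $M'=M_n^{V_{n+2\ell}}$ denote the number of particles that have been ejected from $V_{n+2\ell}$; they now sit at the two sites $-n/2-\ell$ and $n/2+\ell+1$ immediately outside $V_{n+2\ell}$. Lemma~\ref{lemma_NML} yields the stochastic domination $M'\preceq M_n$, hence $\PP_\eta(M'\leqslant k)\geqslant\PP_\eta(M_n\leqslant k)$. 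Since Step~1 only uses instructions at sites of $V_{n+2\ell}$, all instructions at sites outside $V_{n+2\ell}$ are untouched, and thus independent of the outcome of Step~1.

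In Step~2, I attempt to trap the $M'$ ejected particles inside $V_{n+4\ell}\setminus V_n$ using acceptable topplings supported on $V_{n+4\ell}\setminus V_n$ (in particular, no site of $V_n$ is toppled after Step~1). If every ejected particle falls asleep inside $V_{n+4\ell}\setminus V_n$ without ever re-entering $V_n$ or jumping out of $V_{n+4\ell}$, then the concatenation of Steps~1 and~2 is an acceptable toppling sequence contained in $V_{n+4\ell}$ that stabilizes $\eta$ with no exit from $V_{n+4\ell}$. Since the number of exits during any toppling procedure is a non-decreasing function of the odometer at the boundary, Lemma~\ref{lemma_monotonicity} then forces the legal stabilization of $\eta$ in $V_{n+4\ell}$ to also have no exit, that is, $M_{n+4\ell}=0$.

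To bound the success probability of Step~2, I adapt the trapping procedure of~\cite{RS12} used in the proof of Theorem~\ref{thm_no_exit}. I process the $M'$ ejected particles one at a time, tracing each of them through acceptable topplings using fresh instructions until it encounters a sleep instruction and falls asleep. Because each unused instruction is a sleep with probability $\lambda/(1+\lambda)$ independently of everything else, the $j$-th trace uses $G_j$ topplings, where $(G_j)_{j\geqslant 1}$ is an i.i.d.\ Geometric family with parameter $\lambda/(1+\lambda)$. In a nearest-neighbour one-dimensional walk, such a trace performs $G_j-1$ jumps and therefore visits only sites within distance $G_j-1$ of its starting site. On the event $\{G_1+\cdots+G_k\leqslant\ell\}$ we have each $G_j\leqslant\ell$, so every trace stays within the buffer of width at least $\ell$ around its starting site $\pm(n/2+\ell+1)$, and hence remains inside $V_{n+4\ell}\setminus V_n$.

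The main technical obstacle is the handling of \emph{collisions}, where a later trace could wake up an earlier sleeper. I would resolve this with the standard stacking trick of~\cite{RS12}: process the particles in an order that makes the $j$-th trapped one fall asleep further from $V_n$ than all the previous ones, so that no trace ever revisits a site already occupied by a sleeping particle. With this device in place, the total number of fresh instructions used in Step~2 is bounded by $\sum_{j=1}^{M'}G_j\leqslant\sum_{j=1}^{k}G_j$, and by the independence of these fresh instructions from the outcome of Step~1, we conclude
\[
\PP_\eta\big(M_{n+4\ell}=0\big)\ \geqslant\ \PP_\eta\big(M'\leqslant k\big)\cdot\PP\!\left(\sum_{j=1}^k G_j\leqslant\ell\right)\ \geqslant\ \PP_\eta\big(M_n\leqslant k\big)\cdot\PP\!\left(\sum_{j=1}^k G_j\leqslant\ell\right).
\]
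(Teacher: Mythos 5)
Your proposal follows the same two-step decomposition as the paper: Step 1 replaces $M_n$ by $M'_n=M_n^{V_{n+2\ell}}$ and applies Lemma~\ref{lemma_NML} to get $\PP_\eta(M'_n\leqslant k)\geqslant\PP_\eta(M_n\leqslant k)$, and Step 2 invokes an adaptation of the trapping procedure of~\cite{RS12} to trap the ejected particles in $V_{n+4\ell}\setminus V_n$ with probability at least $\PP(G_1+\cdots+G_k\leqslant\ell)$, then concludes via Lemma~\ref{lemma_monotonicity}. Step 1 and the final monotonicity argument are handled exactly as in the paper.

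The gap is in your accounting for Step 2. You claim the $j$-th particle's trace consumes $G_j$ fresh instructions and therefore visits only sites within distance $G_j-1$ of its exit site, so that $\max_j G_j\leqslant\ell$ would already suffice. This cannot be right, for two reasons. First, after Step 1 all the ejected particles on a given side are piled up at a \emph{single} exit site (e.g.\ $-n/2-\ell$ on the left), so the first sleep instruction a trace hits may well be wasted because the site is not singly occupied. Second, once particle $1$ rests at some $z_1$, a jump instruction can send particle $2$'s trace onto $z_1$, waking the sleeper and so extending the trace beyond $G_2$; your ``stacking trick'' of ``processing in an order that makes the $j$-th trapped one fall asleep further from $V_n$'' cannot be implemented as stated, because the resting site of each trace is dictated by the instruction array, not by the processing order. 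The form of the bound is itself a warning sign: a proof along your lines would need only $\max_j G_j\leqslant\ell$, whereas the lemma involves the much more restrictive sum $G_1+\cdots+G_k\leqslant\ell$. The sum appears in the genuine RS12-style argument precisely because each successive particle must be pushed \emph{past} the previous sleepers, so the $j$-th one ends up at distance of order $G_1+\cdots+G_j$ from the exit site, not $G_j$. The paper is terse here and delegates the details to~\cite{RS12}, so this is the step you need to flesh out correctly rather than a place where the paper is doing something secretly different.

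One further minor point: the left exit site of $V_{n+2\ell}=(-n/2-\ell,n/2+\ell]\cap\Z$ is $-n/2-\ell$, not $-(n/2+\ell+1)$; this does not affect the structure of the argument but your displacement bookkeeping should be consistent with it.
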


Note that, when we write~$\PP_{\eta}\big(M_{n+4\ell}=0\big)$, we
implicitly extend the configuration~$\eta:V_n\to\N$ to the configuration
on~$V_{n+4\ell}$ which coincides with~$\eta$ on~$V_n$ and has no
particles on~$V_{n+4\ell}\setminus V_n$.

\begin{proof}
Let~$\lambda,\,P,\,n,\,\eta,\,k,\,\ell$ and~$(G_j)_{j\geq 1}$ be
as in the statement.
To stabilize~$\eta$ in~$V_{n+4\ell}$, we proceed in two steps, as
explained in the sketch of the proof in Section~\ref{section_sketches}.

First, we perform legal topplings in~$V_n$ and acceptable
topplings in~$V_{n+2\ell}\setminus V_n$ until all the sites of~$V_n$
are stable
and all the sites of~$V_{n+2\ell}\setminus V_n$ are empty.
Let us denote by~$M'_n$ the number of particles which jump out
of~$V_{n+2\ell}$ during this step.
It follows from Lemma~\ref{lemma_NML} that~$M_n$ stochastically
dominates~$M'_n$ (note that~$M'_n$
corresponds to~\smash{$M_n^{V_{m+2\ell}}$} in the notation of
Lemma~\ref{lemma_NML}).
Thus, we have
\begin{equation}
\label{dom_stoch_Mn}
\PP_\eta\big(M_n'\leq
k\big)
\ \geq\ 
\PP_\eta\big(M_n\leq k\big)
\,.
\end{equation}

Then, in the second stage, we try to stabilize these~$M'_n$ particles
inside~$V_{n+4\ell}\setminus V_n$, using the trapping procedure
introduced in~\cite{RS12} (in the proof of their Theorem~2).
This procedure shows that, if~$M'_n\leq k$, then with probability at
least~$\PP\big(G_1+\cdots+G_k\leq\ell\big)$, this second stage
succeeds, yielding an acceptable toppling sequence which stabilizes
these~$M'_n$ particles inside~$V_{n+4\ell}\setminus V_n$ with none of these
particles jumping out of~$V_{n+4\ell}\setminus V_n$.

Thus, we deduce that
$$
\PP_{\eta}\big(M_{n+4\ell}=0\big)
\ \geq\ 
\PP_\eta\big(M_n'\leq k\big)
\times
\PP\big(G_1+\cdots+G_k\leq\ell\big)
$$
which, combined with~\eqref{dom_stoch_Mn}, concludes the proof of the
Lemma.
\end{proof}

\subsection{Concluding proof of Theorem~\ref{thm_explicit}}

We now put the pieces together to obtain the claimed bound.

\begin{proof}[Proof of Theorem~\ref{thm_explicit}]
Let~$\lambda>0$, let~$P$ be a nearest-neighbour jump distribution
on~$\Z$,
let~$\zeta>\zeta_c$ and consider~$\varepsilon$,~$\alpha$ and~$\beta$
such that
\begin{equation}
\label{condition_alpha_beta}
0
\ \leq\ 
\frac{(1+\lambda)\varepsilon}{\lambda}
\ <\ 
\alpha
\ <\ 
\beta
\ <\ 
\frac{\zeta-\zeta_c}{4\zeta_c}
\,.
\end{equation}
Let~$(G_j)_{j\geq 1}$ be i.i.d.\
Geometric variables with parameter~$\lambda/(1+\lambda)$.
Since~$(1+\lambda)\varepsilon/\lambda<\alpha$, 
the weak law of large numbers ensures that
$$
\lim\limits_{n\to\infty}\,
\PP\big(
G_1+\cdots+G_{\lfloor\varepsilon
n\rfloor}
\,\leq\,
\alpha n
\big)
\ =\ 
1
\,.
$$
Thus, we can take~$n_0\geq 1$ such that, for every~$n\geq
n_0$,
$$
\PP\big(
G_1+\cdots+G_{\lfloor\varepsilon
n\rfloor}\,\leq\,\alpha n
\big)
\ \geq\ 
\frac{1+4\alpha}{1+4\beta}
\,.
$$
Now, let~$n\geq n_0$ and let~$\eta:V_n\to\N$ be a fixed
deterministic initial configuration such that~$\|\eta\|\geq\zeta
n$.
Applying Lemma~\ref{lemma_spread} with~$k=\lfloor\varepsilon n\rfloor$
and~$\ell=\lfloor\alpha n\rfloor$, we get
\begin{equation}
\label{use_lemma_spread}
\PP_\eta\big(M_{n+4\ell}=0\big)
\ \geq\ 
\PP_\eta\big(M_n\leq\varepsilon n\big)
\times
\PP\big(
G_1+\cdots+G_{\lfloor\varepsilon
n\rfloor}\,\leq\,\alpha n
\big)
\ \geq\ 
\PP_\eta\big(M_n\leq\varepsilon n\big)
\times
\frac{1+4\alpha}{1+4\beta}
\,.
\end{equation}
Then, note that
$$
\frac{\|\eta\|}{n+4\ell}
\ \geq\ 
\frac{\zeta}{1+4\alpha}
\ >\ 
\zeta_c
\,,
$$
by
virtue of~\eqref{condition_alpha_beta}.
Thus, applying Theorem~\ref{thm_no_exit} to~$\eta$, seen as a
configuration on~$V_{n+4\ell}$, we have
\begin{equation}
\label{use_thm_no_exit}
\PP_\eta\big(M_{n+4\ell}=0\big)
\ \leq\ 
\frac{(1+4\alpha)\zeta_c}{\zeta}
\,.
\end{equation}
Combining~\eqref{use_lemma_spread} and~\eqref{use_thm_no_exit}, we get
$$
\PP_\eta\big(M_n\leq\varepsilon n\big)
\ \leq\ 
\frac{1+4\beta}{1+4\alpha}
\times
\frac{(1+4\alpha)\zeta_c}{\zeta}
\ =\ 
\frac{(1+4\beta)\zeta_c}{\zeta}
\,.
$$
Taking the supremum over all
configurations~$\eta:V_n\to\N$ with~$\|\eta\|\geq\zeta n$, we
obtain that
$$
\forall\beta\in
\bigg(\frac{(1+\lambda)\varepsilon}{\lambda},\,\frac{\zeta-\zeta_c}{4\zeta_c}\bigg)\quad
\exists\, n_0\geq 1\quad
\forall n\geq n_0\qquad
\sup_{\substack{\eta:V_n\to\N\,:\\ \|\eta\|\geq\zeta n}}\,
\PP_{\eta}\big(M_n\leq \varepsilon n\big)
\ \leq\ 
\frac{(1+4\beta)\zeta_c}{\zeta}
\,,
$$
which is precisely the claim of Theorem~\ref{thm_explicit}.
\end{proof}

\section{Proof of Theorem~\ref{main_thm}}
\label{section_main}

We now prove the equivalence claimed in Theorem~\ref{main_thm}.
Let~$\lambda>0$, let~$P$ be a nearest-neighbour translation-invariant
jump distribution
on~$\Z$ and let~$\eta_0$ be an i.i.d.\ initial distribution with
mean~$\zeta$ and all particles initially active.

\subsection{Direct implication}

The direct implication is an easy consequence of
Theorem~\ref{thm_explicit}.
It follows from the Central Limit Theorem
that~$\PP\big(\|\eta_0\|_{V_n}\geq\zeta n\big)\to 1/2$
when~$n\to\infty$.
Thus, choosing whatever~$\varepsilon>0$ in the range indicated by
Theorem~\ref{thm_explicit}, we have
$$
\liminf_{n\to\infty}\,
\frac{\E\big[M_n\big]}{n}
\ \geq\ 
\liminf_{n\to\infty}\,
\PP\big(\|\eta_0\|_{V_n}\geq\zeta n\big)
\,
\inf_{\substack{\eta:V_n\to\N\,:\\ \|\eta\|\geq\zeta n}}\,
\PP_\eta\big(M_n>\varepsilon n\big)\,
\varepsilon
\ \geq\ 
\frac 1 2
\times
\Bigg[1-
\frac{\zeta_c}{\zeta}
\bigg(1+\frac{4(1+\lambda)\varepsilon}{\lambda}\bigg)
\Bigg]
\times\varepsilon
\ >\ 
0
\,.
$$

\subsection{Reciprocal: proof of Proposition~\ref{fact_critical}}

The reciprocal implication of Theorem~\ref{main_thm} follows from
Theorem~\ref{thm_condition_Mn} and Proposition~\ref{fact_critical},
which deals with the particular case of~$\zeta=\zeta_c$, and which we
now prove.

\begin{proof}[Proof of Proposition~\ref{fact_critical}]
Let~$d,\,\lambda,\,P$ and~$\eta_0$ be as in the statement.
By monotonicity (see for example Lemma~2.5 of~\cite{Rolla20}), we can
assume without loss of generality that all the particles are active in
the configuration~$\eta_0$.
Let~$\varepsilon\in(0,\,\zeta_c)$.
Then, let us consider another i.i.d.\ initial
distribution~$\eta'_0$ with mean~$\zeta_c-\varepsilon$, which
is coupled with~$\eta_0$ in such a way
that~$\eta'_0(x)\leq\eta_0(x)$ for every~$x\in\Z^d$.
This can be done for example by
taking~$\eta'_0(x)=Y_x\eta_0(x)$, where~$(Y_x)_{x\in\Z^d}$ are
i.i.d.\ Bernoulli variables with
parameter~$(\zeta_c-\varepsilon)/\zeta_c$, independent of everything
else.

Then, for every~$n\geq 1$, denoting by~$M_n$
and~$M'_n$
the numbers of particles which jump out of the box~$V_n$ starting
respectively with~$\eta_0$ and with~$\eta'_0$, we claim
that
\begin{equation}
\label{comparison_Mn}
\E\big[M_n\big]
\ \leq\ 
\varepsilon\, |V_n|
+\E\big[M'_n\big]
\,.
\end{equation}
Indeed, starting from the configuration~$\eta_0$, we may first apply
acceptable topplings to the configuration~$\eta_0-\eta'_0$, until
all particles exit, leaving us with only the
configuration~$\eta'_0$ remaining inside~$V_n$. During this
first stage, the average number of particles which jump out of the box
is equal
to~\smash{$\E\big[\|\eta_0\|-\|\eta'_0\|\big]=\varepsilon\,|V_n|$}.
Then, we stabilize in~$V_n$ with legal topplings, which gives a number
of particles jumping out of the box which is distributed
as~\smash{$M'_n$}.
Since we performed acceptable topplings during the first stage, we
obtain an upper bound on~$M_n$,
whence~\eqref{comparison_Mn}.

Then, by the contrapositive of
Theorem~\ref{thm_condition_Mn}, combined with
Theorem~\ref{thm_phase_transition}, we know that
$$
\lim_{n\to\infty}\,
\frac{\E\big[M'_n\big]}{|V_n|}
\ =\ 
0
\,.
$$
Combining this with~\eqref{comparison_Mn}, we deduce that
$$
\limsup_{n\to\infty}\,
\frac{\E\big[M_n\big]}{|V_n|}
\ \leq\ 
\varepsilon
\,+\,
\lim_{n\to\infty}\,
\frac{\E\big[M'_n\big]}{|V_n|}
\ =\ 
\varepsilon
\,.
$$
This being true for every~$\varepsilon\in(0,\,\zeta_c)$,
the proof of Proposition~\ref{fact_critical} is complete.
\end{proof}

\subsection*{Acknowledgements}

We wish to thank the anonymous referee for his valuable comments which
helped to improve the presentation.

\bibliography{article.bib}
\end{document}